\def\Cur{\mathrm{Cur}}
\def\Perm{\mathrm{Perm}}
\newtheorem{theorem}{Theorem}
\newtheorem{lemma}{Lemma}
\newtheorem{corollary}{Corollary}
\theoremstyle{definition}
\newtheorem{definition}{Definition}
\newtheorem{example}{Example}
\newtheorem{problem}{Problem}
\title{On special identities for dialgebras}
\author{P.~S.~Kolesnikov and V.~Yu.~Voronin}
\address{Sobolev Institute of Mathematics,
630090 Novosibirsk, Russia; {\tt pavelsk@math.nsc.ru};\\
Novosibirsk State University, 630090 Novosibirsk, Russia}
\begin{document}

\begin{abstract}
For every variety of algebras over a field, there is a natural definition of
a corresponding variety of dialgebras (Loday-type algebras).
In particular, Lie dialgebras are equivalent to Leibniz algebras.
We use an approach based on the notion of an operad to study 
the problem of finding special identities for dialgebras. It is proved that 
all polylinear special identities for dialgebras can be 
obtained from special identities for corresponding algebras
by means of a simple procedure. A particular case of this 
result confirms the conjecture by 
M. Bremner, R. Felipe, and J. Sanchez-Ortega,
arXiv:1108.0586.
\end{abstract}

\maketitle

\section{Introduction}
The notion of a Leibniz algebra appeared first in \cite{Blok} and
later independently in \cite{Lod93} gave rise to a series of
research devoted to the theory of dialgebras. By definition, a
(left) Leibniz algebra
is a linear space with a bilinear operation $[\cdot, \cdot ]$ which
satisfies the Jacobi identity in the form
$[x,[y,z]] = [[x,y],z] + [y,[x,z]]$, i.e.,  the operator of left
multiplication $[x,\cdot ]$ is a derivation. This is one of the
most studied noncommutative analogues of Lie algebras.

Various classes of dialgebras
appeared in the literature since they are related to Leibniz algebras in the same way
as the corresponding classes of ordinary algebras are related to Lie algebras.
Associative dialgebras were introduced in \cite{LP93}
as analogues of associative enveloping algebras
for Leibniz algebras, alternative dialgebras appeared in \cite{Liu05} in the study of universal central
extensions for Leibniz algebras, Jordan dialgebras
(first under the name of quasi-Jordan algebras)
were proposed in \cite{VF}, see also \cite{Br09} and \cite{Kol2008}.
All dialgebras of these classes are linear spaces equipped by two bilinear operations
$\vdash $ and $\dashv $ such that
\begin{equation}\label{eq:0-ident(2)}
(x\dashv y)\vdash z = (x\vdash y)\vdash z , \quad x\dashv (y\vdash z)=x\dashv (y\dashv z).
\end{equation}
These identities are common for associative, alternative, Jordan dialgebras mentioned above,
and they also hold for Leibniz algebras provided that $a\vdash b = [a,b]$,
$a\dashv b = -[b,a]$.
Other defining identities of these varieties initially appeared from
a posteriori considerations
motivated by relations with Leibniz algebras.
For example, a dialgebra is associative if in addition to
\eqref{eq:0-ident(2)} the following identities hold
\[
\begin{gathered}
x\vdash (y\vdash z) =  (x\vdash y)\vdash z, \quad
x\vdash (y\dashv z) =  (x\vdash y)\dashv z, \\
x\dashv (y\dashv z) =  (x\dashv y)\dashv z.
\end{gathered}
\]
Then the same space with respect to the new operation
$[a,b] = a\vdash b - b\dashv a$ is a Leibniz algebra.
A systematical study of this relation between Leibniz algebras and
associative dialgebras may be found in \cite{Lod2001}.

An idea of a more conceptual approach to the definition of what should be
called a dialgebra associated with a
given variety $\mathfrak P$ of ordinary algebras was proposed in
\cite{Chap2001} in the case of associative algebras:
It was shown that the operad governing the variety of associative dialgebras in the
sense of  \cite{LP93}  coincides with the Hadamard product $\mathrm{As}\otimes \Perm $,
where
$\mathrm{As}$ is the operad governing the variety of associative algebras and
$\Perm $ is the operad governing associative algebras
satisfying the left commutativity
relation
$(xy)z - (yx)z =0 $.

For an arbitrary variety $\mathfrak P $ of ordinary algebras with one binary
operation (governed by an operad $\mathcal P$), the algorithm
proposed in \cite{Kol2008} and \cite{Pozh09} allows to deduce the
defining identities for the class of (di-)algebras governed by
the operad $\mathcal P\otimes \Perm$ starting with the defining identities of $\mathfrak P $.
In \cite{BSO_Triple}, this algorithm was generalized to the case of arbitrary varieties
of algebras
of any type (i.e., linear spaces with a family of polylinear operations
of arbitrary arity).
In this note, we will show that this generalized algorithm also leads to the class
of $\mathcal P\otimes \Perm$-algebras.
This is why we denote by $\mathcal P\otimes \Perm$  by
$\mathrm{di}\mathcal P$.

This fact allows to consider
a series of questions devoted to elementary properties and relations between
various classes of dialgebras from a unified point of view.
In particular, a morphism of operads
$\omega: \mathcal P\to \mathcal R$  always gives rise to
a functor from the category of $\mathcal R$-algebras to the category of
$\mathcal P$-algebras. So are the well-known functors:

\begin{tabular}{c|c|c}
 $\mathcal R$ & $\mathcal P$ &  $\omega $ \\
\hline
Associative & Lie & $x_1x_2\mapsto x_1x_2-x_2x_1 $ \\
\hline
Associative & Jordan  & $x_1x_2\mapsto x_1x_2+x_2x_1 $ \\
\hline
Alternative & Jordan & $x_1x_2\mapsto x_1x_2+x_2x_1 $ \\
\hline
Alternative & Mal'cev & $x_1x_2\mapsto x_1x_2-x_2x_1 $ \\
\hline
Associative & Jordan triple  & $\langle x_1,x_2,x_3\rangle \mapsto x_1x_2x_3+x_3x_2x_1 $ \\
 & system & \\
\hline
Jordan & Jordan triple & $\langle x_1,x_2,x_3\rangle \mapsto (x_1x_2)x_3 - (x_1x_3)x_2$ \\
& system & ${}+ x_1(x_2x_3) $  \\
\hline
\end{tabular}

\medskip
For each triple $(\mathcal P, \mathcal R, \omega )$ as above
the following {\em speciality problem\/} makes sense:
Whether the variety generated by all those $\mathcal P$-algebras obtained
from $\mathcal R$-algebras coincides with the class of all $\mathcal P$-algebras?
If no, what are the identities separating these classes (special identities)?
The same question is actual for dialgebras:
The corresponding varieties of di$\mathcal R$- and di$\mathcal P$-algebras
are related by a functor raising from the morphism
$\omega \otimes \mathrm{id}$ of operads.

The purpose of this note is to show that the speciality problem
for dialgebras raising from
the triple
$(\mathrm{di}\mathcal P, \mathrm{di}\mathcal R, \omega \otimes \mathrm{id})$
can always be solved modulo the same problem for
ordinary algebras.

\section{The BSO algorithm}
Let us start with the construction from \cite{BSO_Triple}, assuming
the base field $\Bbbk $ is of zero characteristic.

Let $A$ be an associative algebra over $ \Bbbk $ equipped by new $n$-ary operation
\begin{equation}\label{eq:omegadefin}
\omega(x_1,\ldots,x_n)=\sum_{\sigma\in S_n} \alpha_\sigma x_{\sigma(1)} \dots x_{\sigma(n)},
\end{equation}
where $\alpha_\sigma\in\Bbbk $.

Choose an index $i \in \{1, \dots, n\}$. One may rewrite \eqref{eq:omegadefin} as
\[
 \omega(x_1,\ldots,x_n)=\sum_{j=1}^n
 \sum_{S_n^{ji}} \alpha_\sigma x_{\sigma(1)} \dots x_{\sigma(j-1)} x_i x_{\sigma(j+1)}  \dots x_{\sigma(n)},
\]
where $S_n^{ji}$~--- is a set of permutations such that $\sigma(j)=i$.

Denote by $\mathrm{Id}(\omega )$ the set of all polylinear identities satisfied for all
$n$-ary algebras obtained in this way from associative ones.

Starting from the identities $\mathrm{Id}(\omega )$, one may canonically construct
a set of identities
$\mathrm{Id}(\omega )^{(2)}$
of type $\{\omega_1, \dots, \omega _n\}$,
where each $\omega _i $ is an $n$-ary operation. The algorithm of such a construction
was described in \cite{BSO_Triple} (as a KP algorithm), and also
in Section~\ref{subsec:Dialg}.

On the other hand, consider the following operations on an associative dialgebra $D$:
\[
 \omega_i(x_1,\dots,x_n)=\sum_{j=1}^n \sum_{S_n^{ji}}
 \alpha_\sigma x_{\sigma(1)} \vdash \dots \vdash x_{\sigma(j-1)}\vdash x_i \dashv x_{\sigma(j+1)} \dashv \dots\dashv  x_{\sigma(n)},
\]
$i=1,\dots, n$ (the bracketing is not essential here).
The family of $n$-ary operations $\omega_1,\dots,\omega_n$
obtained are denoted by $\mathrm{BSO}\,(\omega)$.
Let $\mathrm{Id}(\mathrm{BSO}\,(\omega ))$ stand for the set of all polylinear identities satisfied for all
algebras with $n$-ary operations $\mathrm{BSO}\,(\omega )$ obtained in this way from associative dialgebras.

\begin{problem}[\cite{BSO_Triple}]\label{probBSO}
Let $\mathrm{char}\,\Bbbk =0$. Prove that for every choice of
 $\omega$ we have $\mathrm{Id}(\mathrm{BSO}\,(\omega))=\mathrm{Id}(\omega)^{(2)}$.
\end{problem}

Next, suppose $\mathrm{char}\,\Bbbk =p>0$. Then the relation 
in Problem \ref{probBSO} is not valid in general, but it is reasonable 
to state

\begin{problem}[\cite{BSO_Triple}]\label{probBSOp}
For 
$\mathrm{char}\,\Bbbk =p>0$ and $d<p$, prove that for every choice of
 $\omega$ we have $\mathrm{Id}_d(\mathrm{BSO}\,(\omega))=\mathrm{Id}_d(\omega)^{(2)}$,
where $\mathrm{Id}_d(\cdot )$ stands for the subset 
of identities of degree $d$ in $\mathrm{Id}(\cdot)$.
\end{problem}

In this paper, we solve these problems.

\section{Preliminaries in operads}
In this section, we state the necessary notions
of the operad theory following mainly \cite{GinzKapr1994},
with a particular accent on the operads governing
varieties of algebras.

A {\em language\/} $\Omega $ is a set of functional
symbols $\{f_i\mid i\in I\}$ equipped by an {\em arity function\/}
$\nu: f_i\mapsto n_i\equiv \nu(f_i)\in \mathbb N$.
An $\Omega $-{\em algebra\/} is a linear space $A$ over a base field
$\Bbbk$
endowed with linear maps
$f_i^A: A^{\otimes n_i}\to A$, $i\in I$ \cite{Kur60}.
Below, we will use the term {\em algebra of type $\Omega $\/}
for an $\Omega $-algebra
to avoid confusion. In this paper, we assume $n_i\ge 2$.

Denote by $\mathcal F_\Omega \langle X\rangle $ the
free algebra of type $\Omega $
generated by the countable set $X=\{x_1,x_2, \dots \}$. The linear
basis of this algebra consists of all terms of type $\Omega $ in variables
from $X$. Let us call such terms {\em monomials}, their linear
combinations (elements of the free algebra) are called {\em
polynomials}.

For every $n\in \mathbb N$ consider the space
$\mathcal F_\Omega(n)$ of all polylinear polynomials of degree
$n$ in $x_1,\dots, x_n$.
The composition
\[
\gamma_{m_1,\dots, m_n}: \mathcal F_\Omega(n)\otimes
  \mathcal F_\Omega(m_1)\otimes \dots \otimes  \mathcal F_\Omega(m_n)
\to  \mathcal F_\Omega(m_1+\dots + m_n)
\]
of such maps is naturally defined by the rule
\[
\gamma_{m_1,\dots, m_n}(f; g_1,\dots, g_n)
= f(g_1(x_1,\dots, x_{m_1}), g_2(x_{m_1+1}, \dots , x_{m_1+m_2}),
 \dots ),
\]
where
$f(x_1,\dots, x_n)\in \mathcal F_\Omega (n)$,
$g_i(x_1,\dots, x_{m_i})\in \mathcal F_\Omega (m_i)$, $i=1,\dots, n$; the result belongs to
$\mathcal F_\Omega (m_1+\dots +m_n)$.
The simplest term $x_1\in \mathcal F_\Omega (1)$ behaves as an identity with respect to
this  composition.
Symmetric groups $S_n$ act on
$\mathcal F_\Omega (n)$ by permutations of variables.

The collection of spaces $\{\mathcal F_\Omega(n)\}_{n\in \mathbb N}$
together
with above-mentioned composition rule, identity element, and
$S_n$-action  is
a particular case of an operad
which is natural to call
the {\em free operad\/} $\mathcal F_\Omega $ generated by~$\Omega $.

Given two operads $\mathcal P$ and $\mathcal R$,
a morphism $\alpha : \mathcal P\to \mathcal R$
is just a family of $S_n$-linear maps
$\{\alpha(n)\}_{n\in \mathbb N}$,
\[
\alpha(n): \mathcal P(n)\to \mathcal R(n)
\]
preserving the composition and the identity
element.
The kernel of $\alpha $ is the collection of subspaces (even $S_n$-submodules)
$\mathrm{Ker}\,\alpha(n)\subseteq \mathcal P(n)$, $n\in \mathbb N$,
which is closed with respect to compositions in the obvious sense.
Such a family of subspaces is called an {\em operad ideal\/} in
$\mathcal P$.

To define a morphism $\pi $ from $ \mathcal F_\Omega$ to an operad $\mathcal P$
it is enough to determine $\pi(n_i)(f_i)$, $f_i = f_i(x_1,\dots, x_{n_i})\in \mathcal F_{\Omega}(n_i)$,
where $f_i$ range through the language $\Omega $, $n_i=\nu(f_i)$.
Moreover, every family $g_i\in \mathcal P(n_i)$, $i\in I$, defines
a unique morphism of operads $\pi:\mathcal F_\Omega \to \mathcal P$
such that $\pi(n_i)(f_i)=g_i$.

Every linear space $A$ gives rise to an operad $\mathcal E(A)$,
the operad of endomorphisms of $A$.
Namely,
$\mathcal E(A)(n)= \mathrm{Hom}\,(A^{\otimes n}, A)$, $n\in \mathbb N$,
compositions and $S_n$-actions are defined in the ordinary way.

A structure of an algebra of type $\Omega $ on a linear
space $A$ may be identified with a
morphism of operads $\alpha : \mathcal F_\Omega \to \mathcal E(A)$
such that $\alpha(n_i)(f_i) = f_i^A$, $i\in I$.
Conversely, every morphism $\alpha : \mathcal F_\Omega \to \mathcal E(A)$
defines a structure of
an algebra on~$A$.

Suppose $\mathfrak P$ is a variety of algebras of type $\Omega $
defined by polylinear identities (this is a generic case if
$\mathrm{char}\,\Bbbk =0$). Then the following consideration
makes sense.

Let $T(\mathfrak P)$ be the ideal of identities (T-ideal)
 in $\mathcal F_\Omega \langle X\rangle $
corresponding to the variety
$\mathfrak P$.
Denote
$\mathcal P(n) = \mathcal F_\Omega (n)/ (T(\mathfrak P )\cap \mathcal F_\Omega (n))$,
$n\in \mathbb N$.
The composition rule and $S_n$-actions are well-defined on the family
$\{\mathcal P(n)\}_{n\in \mathbb N}$, so this collection
is also an operad. Such an operad
is said to be
the {\em governing operad\/} for the variety
$\mathfrak P$. There exists a natural quotient morphism
$\pi: \mathcal F_\Omega \to \mathcal P$.
If $S$
is a defining family of polylinear identities of the variety
$\mathfrak P $ then the kernel of $\pi $ is exactly the operad ideal
generated by $S$ in $\mathcal F_\Omega $.

Every algebra $A$ from the variety
$\mathfrak P $ is determined by a composition
$\pi\circ \bar \alpha$, where $\bar \alpha$
is a morphism from $\mathcal P$ to $\mathcal E(A)$.
Thus,  $A$ is defined
by a morphism of operads $\mathcal P\to \mathcal E(A)$.
Conversely, every morphism of this kind defines an algebra structure
on $A$, and the algebra obtained belongs to $\mathfrak P$.

In general, given an operad $\mathcal P$,
a {\em $\mathcal P$-algebra\/} is a pair
$(A,\alpha)$ of a linear space $A$ and a morphism of operads
 $\alpha:\mathcal P \to \mathcal E(A)$.

\section{Conformal algebras}
The notion of a conformal algebra was introduced in \cite{Kac1996} as
tool of vertex operator algebra study. In a more general context,
a conformal algebra is a pseudo-algebra over the polynomial algebra
$\Bbbk[T]$ in one variable \cite{BDK2001}. Here we consider the last
approach for arbitrary set of operations~$\Omega $.

As we have already mentioned, every linear space $A$ gives rise to
the operad $\mathcal E(A)$. A similar construction exists
for left unital modules over a cocommutative bialgebra $H$. Suppose
$M$ is such a module, then denote
\[
\mathcal E^*(M)(n)=\mathrm{Hom}_{H^{\otimes n}} (M^{\otimes n}, H^{\otimes n}\otimes_H M).
\]
Hereinafter, the symbol $\otimes $ without a subscript stands for the tensor product
of spaces over the base field. The space $H^{\otimes n}$ is considered as the
outer product of regular right $H$-modules, i.e.,
\[
(h_1\otimes \dots \otimes h_n)\cdot h = \sum\limits_{(h)} h_1h_{(1)}\otimes \dots \otimes h_nh_{(n)},
\]
where 
$\sum\limits_{(h)} h_{(1)}\otimes \dots \otimes h_{(n)}$ 
is the value of $n$-iterated coproduct on~$h$.
Compositions $\gamma_{m_1,\dots, m_n}$ of such maps
and the action of $S_n$ on $\mathcal E^*(M)(n)$ were defined in \cite{BDK2001},
see also \cite{Kol2008}
(one needs cocommutativity of $H$ to ensure the action of $S_n$ is well-defined).

A {\em conformal algebra\/} over $H$ is a pair $(C, \alpha)$, where $C$ is an $H$-module as above and
$\alpha : \mathcal F_\Omega \to \mathcal E^*(C)$ is a morphism of operads. If $\alpha $ splits into
$\mathcal F_\Omega \overset{\pi}\to \mathcal P \overset{\bar\alpha}\to \mathcal E^*(C) $
then $C$ is said to be a $\mathcal P$-conformal algebra.

A simple but important example of a conformal algebra may be constructed as follows.
Let $(A,\alpha )$ be a $\mathcal P$-algebra. Consider the free $H$-module
$C=H\otimes A$ and define $\beta=\Cur\,\alpha
 : \mathcal P\to \mathcal E^*(C)$ by the rule
\[
\beta(n) (f): (h_1\otimes a_1)\otimes \dots \otimes (h_n\otimes a_n)\mapsto
  (h_1\otimes \dots \otimes h_n)\otimes_H \alpha(f)(a_1\otimes \dots \otimes a_n),
\]
$f\in \mathcal P(n)$, $h_k\in H$, $a_k\in A$. This is a morphism of operads, and
the $\mathcal P$-conformal algebra $(C,\beta )$ obtained is denoted
$(\Cur\,A, \Cur\,\alpha )$,
the {\em current\/} conformal algebra over~$A$.

The correspondence $A \mapsto \Cur\,A$ is a functor from the category of
$\mathcal P$-algebras to the category of $\mathcal P$-conformal
algebras: Every
morphism $\varphi $ between $\mathcal P$-algebras
can be continued by $H$-linearity to the morphism $\Cur\,\varphi$ of
the corresponding current algebras.

\section{The operad Perm}
The operad $\Perm$ introduced in \cite{Chap2001} is given
by a family of spaces $\Perm(n)=\Bbbk^n$ with
natural composition rule
\[
\gamma_{m_1,\dots , m_n}: e_k^{(n)}\otimes
 e_{j_1}^{(m_1)}\otimes \dots \otimes e_{j_n}^{(m_n)}
=
e^{(m_1+\dots + m_n)}_{m_1+\dots + m_{k-1}+j_k},
\]
where $e_k^{(n)}$, $k=1,\dots, n$, is the standard basis
of $\Bbbk^n$, $n\in \mathbb N$.
Symmetric groups $S_n$ act on $\Perm(n)$ by permutations
of coordinates.

Let $\mathcal P$ be an operad.
Denote by
$\mathrm{di}\mathcal P$ the Hadamard product
$\mathcal P\otimes \Perm$:
$\mathrm{di}\mathcal P (n)  =\mathcal P(n) \otimes \Perm (n)$,
compositions and $S_n$-action are defined in the
componentwise way.

Let us fix a cocommutative bialgebra $H$,
and let $\varepsilon $ stand for its counit.
A left unital $H$-module $C$ is in particular a linear space over the
base field $\Bbbk $.
For every $n\in \mathbb N$ consider $\Bbbk $-linear maps $\mu^k_n $,
$k=1,\dots, n$, from
$H^{\otimes n}\otimes _H C$ to $C$ defined by
\[
\mu_n^k: (h_1\otimes \dots \otimes h_n)\otimes_H c
\mapsto \varepsilon(h_1\,\overset{k}{\hat\dots} \,h_n)h_k c.
\]

\begin{lemma}\label{lem:ZeroMorphism}
If $(C, \alpha )$ is a $\mathcal P$-conformal algebra then
the family of maps
$\{\alpha^{(0)}(n)\}_{n\in \mathbb N}$,
$\alpha^{(0)}(n) : \mathrm{di} \mathcal P(n) \to \mathcal E(C)(n)$,
defined by
\begin{equation}\label{eq:(0)functor}
\alpha^{(0)}(n)(f\otimes e_k^{(n)})=
 \alpha(n)(f)\circ \mu_n^k ,
\end{equation}
$f\in \mathcal P(n)$, $k=1,\dots, n$, $a_j\in C$,
defines a morphism $\alpha^{(0)}$ of operads.
\end{lemma}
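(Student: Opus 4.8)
The plan is to check directly that the family $\{\alpha^{(0)}(n)\}_{n\in\mathbb N}$ satisfies the three defining conditions of a morphism of operads $\mathrm{di}\mathcal P\to\mathcal E(C)$: compatibility with the identity element, $S_n$-equivariance, and preservation of the compositions $\gamma$. Throughout I read the right-hand side of \eqref{eq:(0)functor} as the map $a_1\otimes\dots\otimes a_n\mapsto \mu_n^k\big(\alpha(n)(f)(a_1\otimes\dots\otimes a_n)\big)$, which is the only interpretation for which the domains match. Before anything else I would record that each $\mu_n^k$ is well defined on $H^{\otimes n}\otimes_H C$, i.e.\ that it respects the balancing relation over $H$; this is exactly where the counit enters, since $\mu_n^k$ applied to $(h_1\otimes\dots\otimes h_n)\cdot h\otimes_H c$ and to $(h_1\otimes\dots\otimes h_n)\otimes_H hc$ agree by the iterated counit identity $\sum_{(h)}\varepsilon(h_{(1)})\cdots h_{(k)}\cdots\varepsilon(h_{(n)})=h$ applied to all tensor slots except the $k$-th.

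The identity and equivariance checks are short. Since $\Perm(1)=\Bbbk e_1^{(1)}$, the map $\mu_1^1$ is the canonical isomorphism $H\otimes_H C\cong C$, $h\otimes_H c\mapsto hc$, while $\alpha(1)(\mathrm{id})$ is its inverse $c\mapsto 1\otimes_H c$; hence $\alpha^{(0)}(1)(\mathrm{id}\otimes e_1^{(1)})=\mathrm{id}_C$. For $S_n$-equivariance I would use that $\sigma\cdot(f\otimes e_k^{(n)})=(\sigma f)\otimes e_{\sigma(k)}^{(n)}$ in $\mathrm{di}\mathcal P$, that $\alpha$ is already $S_n$-equivariant into $\mathcal E^*(C)$, and that the action on $\mathcal E^*(C)(n)$ permutes both the arguments and the $H^{\otimes n}$-slots (well defined precisely by cocommutativity of $H$). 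The index $\sigma(k)$ is then exactly the one for which $\mu_n^{\sigma(k)}\circ(\sigma\cdot\alpha(n)(f))$ equals $\sigma\cdot\big(\mu_n^k\circ\alpha(n)(f)\big)$, because the slot surviving under $\mu$ is carried from position $k$ to position $\sigma(k)$.

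The main work, and the step I expect to be the obstacle, is preservation of composition. I would fix generators $f\otimes e_k^{(n)}$ and $g_i\otimes e_{j_i}^{(m_i)}$, put $N=m_1+\dots+m_n$, and write $\alpha(m_i)(g_i)$ on the $i$-th block as $\sum (h^{(i)}_1\otimes\dots\otimes h^{(i)}_{m_i})\otimes_H d_i$. On one side the $\Perm$-composition first yields $e^{(N)}_\ell$ with $\ell=m_1+\dots+m_{k-1}+j_k$, so I must evaluate $\mu_N^\ell$ on $\alpha(N)\big(\gamma(f;g_1,\dots,g_n)\big)$; using that $\alpha$ preserves composition I replace this by the explicit composition in $\mathcal E^*(C)$, in which the $k$-th $H$-output of $\alpha(n)(f)$ is distributed across the $m_k$ slots of the $k$-th block through $\Delta^{(m_k)}$ and multiplied into the $H$-outputs $h^{(k)}_s$. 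On the other side I would first apply each $\mu_{m_i}^{j_i}$, pull the resulting scalars and the surviving factors $h^{(i)}_{j_i}$ out through the $H^{\otimes n}$-linearity of $\alpha(n)(f)$, and then apply $\mu_n^k$. The point is that in both computations the index $\ell$ singles out the slot $(k,j_k)$, every other tensor slot is annihilated by $\varepsilon$, and — using that $\varepsilon$ is multiplicative together with the counit identities $\sum\varepsilon(g_{(1)})\cdots\varepsilon(g_{(m_i)})=\varepsilon(g_i)$ for $i\ne k$ and $\sum\varepsilon(g_{(1)})\cdots g_{(j_k)}\cdots\varepsilon(g_{(m_k)})=g_k$ for the $k$-th block — the two expressions collapse to the same element $\big(\prod_{i\ne k}\prod_s\varepsilon(h^{(i)}_s)\big)\big(\prod_{s\ne j_k}\varepsilon(h^{(k)}_s)\big)\,h^{(k)}_{j_k}\,\mu_n^k\big(\alpha(n)(f)(d_1\otimes\dots\otimes d_n)\big)$. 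Matching these two reductions is the crux; the only delicate part is the bookkeeping of which coproduct factors receive $\varepsilon$ and which single position survives, and this is controlled entirely by the definition of $\ell$ coming from the $\Perm$-composition.
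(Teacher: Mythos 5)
Your proposal is correct and follows essentially the same route as the paper's proof: the identity and $S_n$-equivariance checks are identical, and the composition check is exactly the paper's comparison of $\mu_{m_1+\dots+m_n}^{\ell}$ applied to the composed pseudo-linear map (with the coproduct factors collapsed via the counit) against the iterated application of the maps $\mu_{m_i}^{j_i}$ followed by the $H^{\otimes n}$-linearity of $\alpha(n)(f)$ and $\mu_n^k$, both reducing to the same element. The only addition is your preliminary remark on the well-definedness of $\mu_n^k$, which the paper treats as part of the definition preceding the lemma.
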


\begin{proof}
First, note that $\alpha^{(0)}(n)$ is $S_n$-linear.
Indeed,
\[
\alpha^{(0)}(n): (f\otimes e_k^{(n)})^\sigma =
f^\sigma \otimes e_{\sigma(k)}^{(n)}
\mapsto
\alpha(n)(f)^\sigma \circ \mu_n^{\sigma(k)}
= (\alpha(n)(f)\circ \mu_n^k)^\sigma
\]
since the action of $\sigma $ on $\mathcal E^*(C)$
permutes the arguments of $\alpha(n)(f)$
together with tensor factors in $H^{\otimes n}\otimes _H C$,
see \cite{Kol2008}.

Next, this is obvious that $\alpha^{(0)}(1)$ preserves the identity.

Finally, consider a composition
$\gamma_{m_1,\dots, m_n}(f; g_1,\dots, g_n)$ in $\mathcal P$.
By abuse of notations, assume
\[
\alpha(m_l)(g_l): a_1^{(l)}\otimes \dots \otimes a_{m_l}^{(l)}
\mapsto F^{(l)}\otimes _H b^{(l)}, \quad F^{(l)}\in H^{\otimes m_l},
\]
$l=1,\dots, n$,
and
\[
\alpha(n)(f): b^{(1)}\otimes  \dots \otimes b^{(n)}\mapsto G\otimes _H c,
\quad G\in H^{\otimes n},
\]
$a_j^{(l)}, b^{(l)}, c \in C$.
Then by definition
\begin{multline}\label{eq:HmodComp}
\alpha(m_1+\dots+m_n)(\gamma_{m_1,\dots, m_n}(f; g_1,\dots, g_n)): \\
a_1^{(1)}\otimes \dots \otimes a_{m_1}^{(1)}
 \otimes \dots \otimes
a_1^{(n)}\otimes \dots \otimes a_{m_n}^{(n)}   \\
\mapsto
((F^{(1)}\otimes \dots \otimes F^{(n)})\otimes _H 1)
 (\Delta^{[m_1]}\otimes \dots \otimes \Delta^{[m_n]})G\otimes_H c,
\end{multline}
where $\Delta^{[m]}(h) = \sum\limits_{(h)}h_{(1)}\otimes \dots \otimes h_{(n)}$,
$h\in H$.

Let us fix some
$k\in \{1,\dots, n\}$, $j_l\in \{1,\dots, m_l\}$, $l=1,\dots, n$.
Suppose
$F^{(l)}=h_1^{(l)}\otimes \dots \otimes h_{m_l}^{(l)}$,
$G=h_1\otimes \dots \otimes h_n$.
Then by the properties of the counit $\varepsilon$
 the image of the right-hand side of
\eqref{eq:HmodComp} under $\mu_{m_1+\dots+m_n}^{m_1+\dots + m_{k-1}+j_k}$
is equal to
\begin{equation}\label{eq:Comp_of1}
\varepsilon(F^{(1)})\,\overset{k}{\hat\dots}\, \varepsilon(F^{(n)})
\varepsilon(F_{j_k}^{(k)})\varepsilon(G_k) h^{(k)}_{j_k} h_k c,
\end{equation}
where $G_k = h_1\,\overset{k}{\hat\dots}\, h_n $
and $F_{j_k}^{(k)}$ is defined similarly.

On the other hand, let us compute the composition
\[
\gamma_{m_1,\dots, m_n}(\alpha^{(0)}(n)(f\otimes e_k^{(n)});
\alpha^{(0)}(m_1)(g_1\otimes e_{j_1}^{(m_1)}),
\dots,
\alpha^{(0)}(m_n)(g_n\otimes e_{j_n}^{(m_n)}))
\]
in $\mathcal E(C)$.
By \eqref{eq:(0)functor}, we have
\[
\alpha^{(0)}(m_l)(g_l\otimes e_{j_l}^{(m_l)}):
a_1^{(l)}\otimes \dots \otimes a_{m_l}^{(l)}
\mapsto \varepsilon(F_{j_l}^{(l)})h^{(l)}_{j_l} b^{(l)}.
\]
The $H^{\otimes n}$-linearity of $\alpha(n)(f)$ implies
\begin{multline}\label{eq:Comp_of2}
\alpha(n)(f):
\varepsilon(F_{j_1}^{(1)})h^{(1)}_{j_1} b^{(1)}
\otimes \dots \otimes
\varepsilon(F_{j_n}^{(n)})h^{(n)}_{j_n} b^{(n)}   \\
\mapsto
\varepsilon(F_{j_1}^{(1)})\dots \varepsilon(F_{j_n}^{(n)})
(h^{(1)}_{j_1}\otimes \dots \otimes h^{(n)}_{j_n})G\otimes _H c.
\end{multline}
This is now obvious that the image of the right-hand side of
\eqref{eq:Comp_of2} under $\mu_n^k$ coincides with
\eqref{eq:Comp_of1}.
\end{proof}

Hence, every $\mathcal P$-conformal algebra $(C,\alpha)$ gives rise to a
$\mathrm{di}\mathcal P$-algebra $C^{(0)}=(C, \alpha^{(0)})$.
The correspondence $C\mapsto C^{(0)}$ is obviously a functor
from the category of $\mathcal P$-conformal algebras to the
category of $\mathrm{di}\mathcal P$-algebras.

\section{Dialgebras}\label{subsec:Dialg}
Suppose $\mathcal P$ is a quotient operad of $\mathcal F_\Omega$,
$\pi $ is the corresponding morphism.
This is easy to see that
$\mathrm{di}\mathcal P$
is a quotient of
$\mathrm{di}\mathcal F_\Omega =\mathcal F_\Omega \otimes \Perm $
with respect to the morphism
$\pi\otimes \mathrm{id}$.
In general, this is hard to determine the generators and
defining relations of the Hadamard product of two operads,
but due to the nice properties of $\Perm $
this is easy to do for $\mathrm{di}\mathcal P$.

Consider the free operad $\mathcal F_{\Omega^{(2)}}$,
where the language $\Omega^{(2)}$
is constructed in the following way.
If $\Omega =\{f_i \mid i\in I\}$,
$n_i = \nu(f_i)$, then
$\Omega ^{(2)}=\{f_i^{k} \mid i\in I,\, k=1,\dots, n_i\}$,
$\nu(f_i^{k})=n_i$.

Define the morphism
$\zeta_\Omega: \mathcal F_{\Omega^{(2)}}\to\mathrm{di}\mathcal F_\Omega $
in the following way: $\zeta_\Omega (n)$
maps
$f_i^k(x_1,\dots, x_{n_i})\in \mathcal F_{\Omega^{(2)}}(n_i)$
to $f_i(x_1,\dots, x_{n_i})\otimes e_k^{(n_i)}$.
The composition of $\zeta_\Omega $ with
$\pi\otimes \mathrm{id}$ provides
a morphism
$\pi^{(2)}:\mathcal F_{\Omega^{(2)}}\to \mathrm{di}\mathcal P$.

\begin{lemma}\label{lem:SurjPerm}
For every $n\in \mathbb N$ the linear maps
$\zeta_\Omega(n)$ and
$\pi^{(2)}(n)$ are surjective.
\end{lemma}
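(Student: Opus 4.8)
The plan is to prove surjectivity of $\zeta_\Omega(n)$ first, since surjectivity of $\pi^{(2)}(n)$ then follows immediately: $\pi^{(2)} = (\pi\otimes\mathrm{id})\circ\zeta_\Omega$, and $\pi\otimes\mathrm{id}$ is surjective in each arity because $\pi$ is (the governing-operad quotient $\pi\colon\mathcal F_\Omega\to\mathcal P$ is surjective by construction, and tensoring with the identity on $\Perm$ preserves surjectivity componentwise). So the composite of two surjections is surjective, and the real content is the first map.

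For $\zeta_\Omega(n)$, I would argue by induction on the degree (number of operation symbols) of a monomial, showing that every generator $f\otimes e_k^{(n)}$ of $\mathrm{di}\mathcal F_\Omega(n)=\mathcal F_\Omega(n)\otimes\Perm(n)$ lies in the image. The base case is a single operation: $f_i(x_1,\dots,x_{n_i})\otimes e_k^{(n_i)}$ is by definition exactly $\zeta_\Omega(n_i)\bigl(f_i^k(x_1,\dots,x_{n_i})\bigr)$, so all such elements are hit. For the inductive step, a general element of $\mathrm{di}\mathcal F_\Omega(n)$ is spanned by decomposable tensors $w\otimes e_k^{(n)}$ where $w\in\mathcal F_\Omega(n)$ is a monomial of positive degree; write $w=\gamma_{m_1,\dots,m_r}(f_i;w_1,\dots,w_r)$ as an outer composition with a top operation $f_i$ of arity $r=n_i$ and subtrees $w_1,\dots,w_r$ of smaller degree whose arities sum to $n$. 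The key computational fact I would exploit is the explicit composition rule for $\Perm$ given in the excerpt: $\gamma_{m_1,\dots,m_r}\bigl(e_s^{(r)};e_{j_1}^{(m_1)},\dots,e_{j_r}^{(m_r)}\bigr)=e^{(n)}_{m_1+\dots+m_{s-1}+j_s}$. This says that the single basis index $k\in\{1,\dots,n\}$ on the left factors uniquely: there is a unique block $s$ (determined by which of the ranges $[m_1+\dots+m_{s-1}+1,\,m_1+\dots+m_s]$ contains $k$) and a unique in-block index $j_s$ with $k=m_1+\dots+m_{s-1}+j_s$, while the remaining indices $j_l$ for $l\neq s$ are free.

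Concretely, I would choose the top index $s$ and in-block index $j_s$ so that $e_s^{(r)}$ composed with the subtree indices reproduces $e_k^{(n)}$, set $j_l=1$ (say) for $l\neq s$, and then use componentwise compatibility of $\zeta_\Omega$ with operadic composition together with the inductive hypothesis applied to the strictly smaller subtrees $w_l\otimes e_{j_l}^{(m_l)}$. Since $\zeta_\Omega$ is an operad morphism, $\zeta_\Omega(n)$ maps the corresponding composite in $\mathcal F_{\Omega^{(2)}}$—namely $\gamma\bigl(f_i^s;u_1,\dots,u_r\bigr)$ where each $u_l$ is a preimage of $w_l\otimes e_{j_l}^{(m_l)}$—onto $\gamma_{m_1,\dots,m_r}(f_i\otimes e_s^{(r)};\,w_1\otimes e_{j_1}^{(m_1)},\dots)$, and the $\Perm$ rule guarantees the second tensor slot of this equals $e_k^{(n)}$ while the first equals $w$. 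Thus $w\otimes e_k^{(n)}$ is in the image.

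The main obstacle I anticipate is purely bookkeeping rather than conceptual: one must verify that the block index $s$ and the in-block index $j_s$ extracted from $k$ are consistent across the recursion, and that choosing the spectator indices $j_l$ freely does not disturb the first ($\mathcal F_\Omega$) tensor component, which it does not because composition in $\mathrm{di}\mathcal F_\Omega$ is componentwise and the $\mathcal F_\Omega$-component of the composite is just the usual tree composition $w$ regardless of the $\Perm$-indices. Once the unique factorization of $k$ through the $\Perm$ composition rule is pinned down, the induction closes cleanly.
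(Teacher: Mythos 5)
Your proposal is correct and is essentially the paper's own argument: the authors also reduce to surjectivity of $\zeta_\Omega(n)$ (with $\pi^{(2)}(n)$ surjective as a composite with the quotient $\pi\otimes\mathrm{id}$) and exhibit a preimage of each $w\otimes e_k^{(n)}$ by constructing an explicit section $\rho(n)$, described via planar trees with the $k$th leaf emphasized, where a vertex whose $j$th branch contains the emphasized leaf gets superscript $j$ and all other vertices get superscript $1$. Your induction on the degree of the monomial, using the unique factorization of $k$ through the $\Perm$ composition rule and setting the spectator indices $j_l=1$, unrolls to exactly that section, so the two proofs coincide in substance.
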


\begin{proof}
To prove the surjectivity of $\zeta_\Omega $ (and hence of $\pi^{(2)}$)
it is enough to
show that $\mathrm{di}\mathcal F_\Omega$ is generated
by $f_i\otimes e_k^{(n_i)}$, $i\in I$, $k=1,\dots, n_i$.
In the binary case it was actually done in \cite{Val2008}
and \cite{Kol2008}, the general case can be processed
analogously. It is enough to construct a section
$\rho(n): (\mathcal F_\Omega \otimes \Perm)(n) \to
\mathcal F_{\Omega^{(2)}}(n)$ such that
$\rho(n)\circ \zeta_\Omega (n) = \mathrm{id}$.
It was done in \cite{BSO_Triple}, let us
recall here the construction in terms of planar trees.
Every monomial $f\in \mathcal F_{\Omega }(n)$
can be identified with a planar tree with $n$ leaves (variables)
labeled by numbers $1,\dots ,n $ and vertices labeled by
symbols from $\Omega $, the degree (number of out-coming branches) of a vertex labeled by $f_i\in \Omega $
is equal to~$n_i$. Then $f\otimes e^{(n)}_k$ may
be considered as a tree with $k$th emphasized vertex.
To get $\rho(n)(f\otimes e^{(n)}_k)$ we should add superscripts
to the labels of vertices in the tree corresponding to $f$
in the following way. If a $k$th (counting from the left-hand
side) out-coming branch of a vertex labeled by $f_i\in \Omega $ contains the
emphasized leaf then the label is replaced with $f_i^k$.
If neither of the out-coming branches in this vertex contain the
emphasized leaf then the label is replaced with $f_i^1$.
\end{proof}

Suppose $S$ is a set of polylinear
polynomials such that the kernel of $\pi $ is generated by $S$
(e.g., if $\mathcal P$ is a governing operad for a variety
$\mathfrak P $ then $S$ consists of its defining identities).
Consider the operad ideal $J(S)$ in $\mathcal F_{\Omega^{(2)}} $
generated by
\begin{multline}\label{eq:0ident_general}
f^k(x_1, \dots, x_{j-1}, g^l(x_j, \dots, x_{j+m-1}), x_{j+m}, \dots , x_{n+m-1})\\
 - f^k(x_1, \dots, x_{j-1}, g^p(x_j, \dots, x_{j+m-1}), x_{j+m}, \dots , x_{n+m-1}),
\end{multline}
\[
f,g\in \Omega ,\ n=\nu(f),\ m=\nu(g),\
k,j=1,\dots, n,\ k\ne j,\ l,p=1,\dots, m,
\]
and
\begin{equation}\label{eq:DottedIds}
s^k(x_1,\dots, x_n), \quad s\in S\cap \mathcal F_\Omega(n), \ n\in \mathbb N, \ k=1,\dots, n,
\end{equation}
where
$s^k=\rho(n)(s\otimes e_k^{(n)})$.
Denote by $\mathcal P^{(2)}$ the quotient operad of
$\mathcal F_{\Omega^{(2)}}$ with respect to $J(S)$, and let
$\hat \pi^{(2)}$ be the corresponding morphism
from $\mathcal F_{\Omega^{(2)}}$ to $\mathcal P^{(2)}$.

This is easy to see that $J(S)$ is contained in the kernel
of $\pi^{(2)}$. Thus we have the following commutative diagram:
\[
\begin{CD}
\mathcal F_{\Omega^{(2)}} @>\zeta_\Omega >> \mathcal F_\Omega \otimes \Perm @>\mathrm{pr}>> \mathcal F_\Omega \\
@V\hat\pi^{(2)}VV @VV\pi\otimes\mathrm{id} V @VV\pi V \\
\mathcal P^{(2)} @>>> \mathcal P\otimes \Perm @>\mathrm{pr}>> \mathcal P
\end{CD}
\]
where $\mathrm{pr}$ stands for the natural projection.

Our aim is to show that the kernels of $\hat \pi^{(2)}$ and $\pi^{(2)}$
are equal.

Suppose $(A,\alpha)$ is a $\mathcal P^{(2)}$-algebra.
By abuse of notations, let us identify
$f^k\in \mathcal F_{\Omega^{(2)}}(n)$
and their images under $\hat\pi^{(2)}(n)$ in $\mathcal P^{(2)}(n)$.

Let $A_0$ be the $\Bbbk $-linear span of all
\[
\alpha(n)(f_i^p - f_i^l)(a_1,\dots, a_{n_i}),
 \quad i\in I,\ a_j\in A,\ p,l=1,\dots,n_i.
\]
It follows from \eqref{eq:0ident_general} that $A_0$
is an ideal in the algebra $A$.
Indeed, for every $f\in \Omega $
\[
\alpha(n)(f^k)(a_1,\dots, a_{j-1}, b, a_{j+1},\dots, a_n)=0, \quad b\in A_0,
\]
if $j\ne k$ ($f\in \Omega $, $n=\nu(f)$, $k=1,\dots, n$).
For $j=k$, one may add
$\alpha(n)(f^q)(a_1,\dots, a_{j-1}, \break b, a_{j+1},\dots, a_n)$
with $q\ne k$
(which is zero) and make sure the result is again in
$A_0$.

Denote $\bar A=A/A_0$. The morphism $\alpha $ induces
a morphism $\bar \alpha : \mathcal P^{(2)} \to \mathcal E(\bar A)$,
such that $(\bar A, \bar \alpha )$ is the quotient
$\mathcal P^{(2)}$-algebra. In this algebra, the values of
algebraic operations $\alpha(n)(f^k)$, $f\in \Omega $, $n=\nu(f)$,
$k\in \{1,\dots, n\}$ do not depend on $k$, so this is actually
an algebra of language $\Omega $:
\[
f^{\bar A}(\bar a_1,\dots, \bar a_n)=\overline{\alpha(n)(f^k)(a_1,\dots, a_n)},
\quad f\in \Omega , \ n=\nu(f),
\]
where $a_j\in A$, $\bar a = a+A_0\in \bar A$.
Moreover, it is obvious that
$\bar A$ is actually a $\mathcal P$-algebra.

Consider the formal direct sum of spaces $\hat A=\bar A\oplus A$
and define algebraic operations of language $\Omega $ on $\hat A$
as follows:
\[
g^{\hat A}(z_1,\dots, z_n) =
 \begin{cases}
    g^{\bar A}(z_1,\dots, z_n),
        & z_i\in \bar A \mbox{ for all }i=1,\dots, n; \\
    \alpha(n)(g^k)(a_1,  \dots,  a_n),&
        z_i=\bar a_i\in \bar A \mbox{ for all }i\ne k, \\
         & z_k=a_k\in A;\\
    0, &\mbox{ more than one }z_i\in A.
 \end{cases}
\]
$g\in \Omega $, $\nu(g)=n$.
Denote by $\hat \alpha $ the corresponding morphism
from $\mathcal F_\Omega $ to $\mathcal E(\hat A)$.

The definition of the canonical section $\rho $ from
Lemma \ref{lem:SurjPerm} and induction on $n$ imply that
for every $s\in \mathcal F_\Omega(n) $,
$s^{\hat A}:= \hat\alpha(n)(s)$, we have
\[
s^{\hat A}(z_1,\dots, z_n) =
\rho(n)(s\otimes e^{(n)}_k)(a_1,  \dots,  a_n)\in A\subseteq \hat A
\]
if $z_i=\bar a_i\in \bar A$  for all $i\ne k$ and
$z_k=a_k\in A$. Therefore, every $s\in S$ is an identity on $\hat A$,
so $(\hat A, \hat\alpha )$ is actually a $\mathcal P$-algebra.

\begin{theorem}\label{thm:EquivDialgDefn}
The kernels of $\pi^{(2)}$ and $\hat\pi^{(2)}$ coincide,
so the operads $\mathcal P^{(2)}$ and $\mathrm{di}\mathcal P$
are equivalent.
\end{theorem}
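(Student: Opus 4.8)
The plan is as follows. Since $\ker\hat\pi^{(2)}=J(S)$ is already known to be contained in $\ker\pi^{(2)}$, and since $\pi^{(2)}$ is surjective by Lemma~\ref{lem:SurjPerm}, the induced morphism $\mathcal P^{(2)}\to\mathrm{di}\mathcal P$ is surjective, and it remains only to establish the reverse inclusion $\ker\pi^{(2)}\subseteq\ker\hat\pi^{(2)}$. Concretely, I must show that a polylinear element $u\in\mathcal F_{\Omega^{(2)}}(n)$ which vanishes on every $\mathrm{di}\mathcal P$-algebra already vanishes on every $\mathcal P^{(2)}$-algebra. The key idea is to realize an arbitrary $\mathcal P^{(2)}$-algebra inside a $\mathrm{di}\mathcal P$-algebra of current type, using precisely the $\mathcal P$-algebra $\hat A$ constructed above.

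Given a $\mathcal P^{(2)}$-algebra $(A,\alpha)$, I take the $\mathcal P$-algebra $\hat A=\bar A\oplus A$ produced above, fix $H=\Bbbk[T]$ with $T$ primitive (so $\varepsilon(1)=1$, $\varepsilon(T)=0$), form the current conformal algebra $\Cur\,\hat A=H\otimes\hat A$ with structure morphism $\beta=\Cur\,\hat\alpha$, and pass to the $\mathrm{di}\mathcal P$-algebra $B=(\Cur\,\hat A)^{(0)}$ via Lemma~\ref{lem:ZeroMorphism}. I then define $\theta\colon A\to B$ by $\theta(a)=1\otimes\bar a+T\otimes a$. This map is injective, because the summand $T\otimes a$ recovers $a$ from $\theta(a)$.

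The crux is a single-generator computation showing that $\theta$ is a morphism of algebras of type $\Omega^{(2)}$. For a generator $f^k$ one evaluates $\beta^{(0)}(n)(\bar f\otimes e_k^{(n)})(\theta(a_1),\dots,\theta(a_n))$, where $\bar f\in\mathcal P(n)$ is the image of $f$. Expanding each $\theta(a_i)$ into its two summands and applying $\mu_n^k$, the counit forces every slot $i\ne k$ to contribute its $1\otimes\bar a_i$ part; in the distinguished slot $k$ the choice of the $1$-part yields $1\otimes f^{\bar A}(\bar a_1,\dots,\bar a_n)$, while the choice of the $T$-part yields $T\otimes\alpha(n)(f^k)(a_1,\dots,a_n)$, because $\hat A$ then sees exactly one argument lying in $A$. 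Since $f^{\bar A}(\bar a_1,\dots,\bar a_n)=\overline{\alpha(n)(f^k)(a_1,\dots,a_n)}$, the sum equals exactly $\theta(\alpha(n)(f^k)(a_1,\dots,a_n))$. Thus $\theta$ intertwines every generator $f^k$, and by functoriality it intertwines the image of every term of $\mathcal F_{\Omega^{(2)}}$.

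It then remains to conclude. For $u\in\ker\pi^{(2)}$ the image of $u$ in $\mathrm{di}\mathcal P$ is zero, so $u$ evaluates to zero on the $\mathrm{di}\mathcal P$-algebra $B$; the intertwining identity gives $\theta(\alpha(u)(a_1,\dots,a_n))=0$, and injectivity of $\theta$ yields $\alpha(u)(a_1,\dots,a_n)=0$. Since $A$ was an arbitrary $\mathcal P^{(2)}$-algebra (take, e.g., the free one), this forces $\hat\pi^{(2)}(u)=0$, i.e.\ $u\in\ker\hat\pi^{(2)}$, which gives the equality of kernels and hence the equivalence $\mathcal P^{(2)}\cong\mathrm{di}\mathcal P$. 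I expect the single-generator computation of the third step to be the main obstacle: the whole argument hinges on the two distinguished slots—the one surviving $\mu_n^k$ and the unique $A$-argument recognized by $\hat A$—coinciding and reproducing $\theta$ on the nose; once this is verified the remaining steps are formal.
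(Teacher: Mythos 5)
Your argument is correct and follows essentially the same route as the paper: you reduce to the containment $\mathrm{Ker}\,\pi^{(2)}\subseteq \mathrm{Ker}\,\hat\pi^{(2)}$ and realize an arbitrary $\mathcal P^{(2)}$-algebra $(A,\alpha)$ inside the $\mathrm{di}\mathcal P$-algebra $(\Cur\,\hat A)^{(0)}$ via the embedding $a\mapsto 1\otimes\bar a+T\otimes a$, exactly as in the paper (which only differs in choosing $H=\Bbbk\mathbb Z_2$ rather than $\Bbbk[T]$, either of which works since all that is needed is a nonzero $T$ with $\varepsilon(T)=0$). Your explicit single-generator verification that this map is a homomorphism of $\Omega^{(2)}$-algebras is the computation the paper leaves implicit, and it checks out.
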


\begin{proof}
We have already seen that the kernel of $\hat\pi^{(2)}$
is contained in the kernel of $\pi^{(2)}$.
Conversely, assume there exists an identity that holds
on all $\mathrm{di}\mathcal P$-algebras but does not hold
on some $\mathcal P^{(2)}$-algebra $(A,\alpha)$.

Consider the $\mathcal P$-algebra $(\hat A, \hat\alpha)$
constructed above and fix a bialgebra $H$ with a nonzero
$T\in H$ such that $\varepsilon(T)=0$. For example, one may
consider the group algebra $H=\Bbbk\mathbb Z_2$.

The current conformal algebra $\Cur \hat A = H\otimes \hat A$
is a $\mathcal P$-conformal algebra.
By Lemma \ref{lem:ZeroMorphism}, $(\Cur\hat A)^{(0)}$
is a $\mathrm{di}\mathcal P$-algebra. Note that
\[
A \to (\Cur\hat A)^{(0)}, \quad
 a\mapsto 1\otimes \bar a + T\otimes a, \quad a\in A
\]
is an injective homomorphism of $\Omega^{(2)}$-algebras.
Hence, $A$ is in fact $\mathrm{di}\mathcal P$-algebra
and thus satisfies all identities that hold on the class
of such algebras.
\end{proof}

\begin{corollary}\label{cor:EmbConformal}
Every $\mathrm{di}\mathcal P$-algebra $A$
is embedded into $(\Cur \hat A)^{(0)}$
over an appropriate bialgebra $H$, $\hat A$
is a $\mathcal P$-algebra.
\end{corollary}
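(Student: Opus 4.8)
The plan is to recognize that the statement is exactly the embedding already produced inside the proof of Theorem~\ref{thm:EquivDialgDefn}, now read as a statement about an arbitrary $\mathrm{di}\mathcal P$-algebra. By Theorem~\ref{thm:EquivDialgDefn} the operads $\mathcal P^{(2)}$ and $\mathrm{di}\mathcal P$ are equivalent, so I may regard a given $\mathrm{di}\mathcal P$-algebra $A$ as a $\mathcal P^{(2)}$-algebra $(A,\alpha)$. On this $A$ I would run the construction preceding the theorem: form the ideal $A_0$ spanned by the elements $\alpha(n)(f_i^p-f_i^l)(a_1,\dots,a_{n_i})$, pass to the $\mathcal P$-algebra $\bar A=A/A_0$, and build the $\mathcal P$-algebra $\hat A=\bar A\oplus A$ with the operations specified there. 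No new verification is needed at this stage, since the facts that $A_0$ is an ideal, that $\bar A$ is a $\mathcal P$-algebra, and that every $s\in S$ vanishes on $\hat A$ were all established before.

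I would then fix a cocommutative bialgebra $H$ possessing a nonzero element $T$ with $\varepsilon(T)=0$, say $H=\Bbbk\mathbb Z_2$, and form the current conformal algebra $\Cur\hat A=H\otimes\hat A$, which is a $\mathcal P$-conformal algebra. By Lemma~\ref{lem:ZeroMorphism} its image under the $(0)$-functor is a $\mathrm{di}\mathcal P$-algebra $(\Cur\hat A)^{(0)}$, and the proposed embedding is
\[
\varphi\colon A\to(\Cur\hat A)^{(0)},\qquad a\mapsto 1\otimes\bar a+T\otimes a.
\]
Injectivity is immediate: under the decomposition $H\otimes\hat A=(H\otimes\bar A)\oplus(H\otimes A)$ the component of $\varphi(a)$ in $H\otimes A$ is $T\otimes a$, and $T\ne 0$ forces $a=0$ whenever $\varphi(a)=0$.

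The one computational step is to confirm that $\varphi$ is a homomorphism of $\Omega^{(2)}$-algebras, i.e.\ that it intertwines each generating operation $f_i^k$ (acting on $(\Cur\hat A)^{(0)}$ through $f_i\otimes e_k^{(n)}$). Here I would expand the value of this operation on $\varphi(a_1),\dots,\varphi(a_n)$ by multilinearity over the two summands of each $\varphi(a_j)=(1\otimes\bar a_j)+(T\otimes a_j)$. Inside the $(0)$-functor the map $\mu_n^k$ applies the counit to every tensor slot but the $k$th, so $\varepsilon(T)=0$ annihilates every term in which a slot $j\ne k$ carries the coefficient $T$. Only two terms survive: the purely $\bar A$ term, yielding $1\otimes f_i^{\bar A}(\bar a_1,\dots,\bar a_n)$, and the term with $a_k$ in the $k$th slot, yielding $T\otimes\alpha(n)(f_i^k)(a_1,\dots,a_n)$. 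Invoking the relation $f_i^{\bar A}(\bar a_1,\dots,\bar a_n)=\overline{\alpha(n)(f_i^k)(a_1,\dots,a_n)}$ recorded for $\hat A$, the sum becomes $\varphi\bigl(\alpha(n)(f_i^k)(a_1,\dots,a_n)\bigr)$, which is precisely $\varphi$ applied to the value of $f_i^k$ in $A$. This single isolation of one off-diagonal term via $\varepsilon(T)=0$ is the only real content; I expect it to be the main (though modest) obstacle, as everything else is transported directly from the proof of Theorem~\ref{thm:EquivDialgDefn}.
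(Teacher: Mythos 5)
Your proposal is correct and is essentially the paper's own argument: the corollary is stated without separate proof because it is exactly the embedding $a\mapsto 1\otimes\bar a+T\otimes a$ into $(\Cur\hat A)^{(0)}$ constructed inside the proof of Theorem~\ref{thm:EquivDialgDefn}, applied to an arbitrary $\mathrm{di}\mathcal P$-algebra viewed as a $\mathcal P^{(2)}$-algebra. Your explicit verification that this map is a homomorphism (isolating the two surviving terms via $\varepsilon(T)=0$) correctly fills in a step the paper only asserts.
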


\begin{lemma}\label{lem:Cur2Alg-Central}
Consider $t=t(x_1,\dots, x_d)\in \mathrm{di}\mathcal P(n)$.
Then $t=t_1\otimes e_1^{(n)}+\dots + t_n\otimes e_n^{(n)}$,
$t_k\in \mathcal P(n)$.
Let $(A,\alpha )$ be a $\mathcal P$-algebra.
Then the dialgebra
$(\Cur A)^{(0)}$ satisfies the identity $t=0$
if and only if $A$ satisfies all identities $t_k=0$, $k=1,\dots, d$.
\end{lemma}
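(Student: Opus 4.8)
The plan is to evaluate the $n$-ary operation $\beta^{(0)}(n)(t)$ of $(\Cur A)^{(0)}$ explicitly on the generators of $C=H\otimes A$ and then to read off the equivalence. Writing $\beta=\Cur\,\alpha$ and using the decomposition $t=\sum_{k=1}^n t_k\otimes e_k^{(n)}$, I would first compute, for each $k$ and arbitrary $h_j\in H$, $a_j\in A$,
\[
\beta^{(0)}(n)(t_k\otimes e_k^{(n)})\big((h_1\otimes a_1),\dots,(h_n\otimes a_n)\big)
=\varepsilon(h_1\,\overset{k}{\hat\dots}\,h_n)\,\big(h_k\otimes \alpha(n)(t_k)(a_1,\dots,a_n)\big),
\]
by combining the definition of the current morphism $\beta(n)(t_k)$ (which sends the arguments to $(h_1\otimes\dots\otimes h_n)\otimes_H\alpha(n)(t_k)(a_1,\dots,a_n)$) with the map $\mu_n^k$ from \eqref{eq:(0)functor}. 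Summing over $k$ gives a closed formula for $\beta^{(0)}(n)(t)$ on generators. Since $C=H\otimes A$ is spanned by such elements and the operation is multilinear, the identity $t=0$ holds on $(\Cur A)^{(0)}$ if and only if this sum vanishes for all choices of the $h_j$ and $a_j$.

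The \emph{if} direction is then immediate: if $A$ satisfies each identity $t_k=0$, that is, $\alpha(n)(t_k)(a_1,\dots,a_n)=0$ for all $a_j\in A$, then every summand above vanishes, hence so does $\beta^{(0)}(n)(t)$.

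For the converse I would separate the summands by a careful choice of the arguments. Fixing an index $k$, I take $H$ (as in the proof of Theorem~\ref{thm:EquivDialgDefn}) to contain a nonzero element $T$ with $\varepsilon(T)=0$, and substitute $h_k=T$ and $h_j=1$ for $j\ne k$. Then $\varepsilon(h_1\,\overset{m}{\hat\dots}\,h_n)=\delta_{mk}$: for $m=k$ the remaining counits are all $\varepsilon(1)=1$, while for $m\ne k$ the omitted-product still contains the factor $\varepsilon(T)=0$ and the term dies. Thus the whole sum collapses to $T\otimes \alpha(n)(t_k)(a_1,\dots,a_n)$, which by hypothesis equals $0$ in $H\otimes A$. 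Because $T\ne 0$, the elementary fact that $T\otimes v=0$ forces $v=0$ in a tensor product of vector spaces yields $\alpha(n)(t_k)(a_1,\dots,a_n)=0$ for all $a_j$, i.e. $A$ satisfies $t_k=0$. Running this over all $k=1,\dots,n$ completes the argument.

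I expect the only point requiring real care to be this separation step in the converse. It hinges on choosing $H$ with an element in the kernel of the counit: for $H=\Bbbk$ the statement would fail, since then $\mu_n^k$ no longer depends on $k$ and one could only recover $\sum_k t_k=0$ rather than the individual identities. It also uses the passage from vanishing in $H\otimes A$ to vanishing in $A$. The forward computation of the operation, by contrast, is a routine unwinding of the definitions of $\Cur\,\alpha$ and $\mu_n^k$.
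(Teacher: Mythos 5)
Your proof is correct and takes essentially the same route as the paper's: both directions are handled by the explicit formula for $\beta^{(0)}(n)(t)$ on generators of $H\otimes A$, and the converse is obtained by substituting $T\otimes a_k$ (with $\varepsilon(T)=0$, $T\ne 0$) in the $k$th slot and $1\otimes a_j$ elsewhere, which isolates the term $T\otimes\alpha(n)(t_k)(a_1,\dots,a_n)$. Your remark that the statement implicitly requires $H$ to contain such a $T$ (so $H=\Bbbk$ would not do) is a correct reading of the hypothesis the paper uses tacitly.
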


\begin{proof}
It follows from the construction that
if $\alpha(n)(t_k)=0$ for all $k=1,\dots, n$ then
$(\Cur\,\alpha)(n)(t_k)=0$ and hence
$(\Cur\,\alpha)^{(0)}(n)(t)=0$.

Conversely,
consider $g = (\Cur\,\alpha)^{(0)}(n)(t)\in \mathcal E(H\otimes A)(n)$
and compute
\[
b_k= g(1\otimes a_1, \dots , T\otimes a_k, \dots , 1\otimes a_n), \quad
k=1,\dots, n,
\]
for all $a_1,\dots, a_n\in A $.
On the one hand, $b_k=0$ since $(\Cur\,A)^{(0)}$
satisfies the identity $t=0$. On the other hand,
$b_k = T\otimes \alpha(n)(t_k)(a_1,\dots, a_n)$,
so $A$ satisfies $t_k=0$ for all $k$.
\end{proof}

\section{Morphisms of operads and functors}

If $\mathcal P$ and $\mathcal R$ are two operads
then every morphism $\alpha :\mathcal P\to \mathcal R$
gives rise to a functor from the category of
$\mathcal R$-algebras to the category of $\mathcal P$-algebras.
Namely, if $(A,\beta)$ is an $\mathcal R$-algebra
then the same space $A$ with respect to the composition
$\alpha\circ\beta: \mathcal P\to \mathcal E(A)$
is a $\mathcal P$-algebra. The correspondence
$(A,\beta) \mapsto (A, \alpha\circ\beta)$
is obviously functorial. The construction that appears
in Problem \ref{probBSO} is a particular case
of such a functor.

Indeed, assume
$\Omega $ and $\Xi$ are two languages,
$\mathcal F_\Omega $ and $\mathcal F_\Xi$
are two corresponding free operads. Suppose
$\pi:\mathcal F_\Omega \to \mathcal P$
and
$\rho:\mathcal F_\Xi\to \mathcal R$ are
two quotient morphisms to operads
$\mathcal P$ and $\mathcal R$ governing
some varieties of algebras.

Let $\omega :\mathcal F_\Omega \to \mathcal F_\Xi$
be a morphism of operads.
The family of maps
$\omega(n): \mathcal F_\Omega(n) \to \mathcal F_\Xi(n)$
determines (and can be completely determined by) an interpretation
of operations from $\Omega $ via operations from $\Xi $.
We say that $\omega $ induces a morphism
$\bar\omega \mathcal P\to \mathcal R$ iff
$\mathrm{Ker}\,\pi(n)\subseteq
 \mathrm{Ker}\,(\omega (n)\circ \rho(n) )$ for all $n\in \mathbb N$.

\begin{example}
 Let $\Omega $ and $\Xi $ contain one binary operation denoted
by $\lambda $ in $\Omega $ and $\mu $ in $\Xi $, then
the morphism $\omega $ determined by the rule
$\lambda  = \mu  - \mu^{(12)}$, $(12)\in S_2$,
induces a morphism $\bar\omega $
from the operad $\mathrm{Lie}$ (governing the variety
of Lie algebras) to the operad $\mathrm{As}$ (associative algebras).
If $(A,\alpha )$ is an $\mathrm{As}$-algebra
then the pair
$(A, \bar\omega\circ\alpha)$
is exactly the adjoint $\mathrm{Lie}$-algebra of $A$ (usually denoted by $A^{(-)}$).

The Hadamard product $\bar\omega \otimes \mathrm{id}: \mathrm{diLie}\to \mathrm{diAs}$
defines the corresponding functor from the category of associative dialgebras
to the category of Leibniz algebras (Lie dialgebras) \cite{Lod2001}.

A similar relation holds for Mal'cev dialgebras \cite{BPSO} and alternative dialgebras \cite{Liu05}.
\end{example}

\begin{example}
Let $\mathrm{JTS}$ be the operad governing the variety of Jordan triple systems (see, e.g., \cite{Jac49}).
Then there exists $\bar\omega : \mathrm{JTS}\to \mathrm{As}$
defined as follows: If $\tau =(\cdot, \cdot, \cdot)\in \mathrm{JTS}(3)$ is the triple operation on $\mathrm{JTS}$-algebras
and $\mu \in \mathrm{As}(2)$ is the product on associative algebras
then
$\bar \omega (\tau ) = \gamma_{1,2}(\mu;\mathrm{id},\mu) + \gamma_{1,2}(\mu;\mathrm{id},\mu)^{(13)}$.
This is the well-known construction of a Jordan triple system on an associative algebra:
$(a,b,c)=abc+cba$.

In \cite{BSO_Triple}, the notion of a {\em Jordan triple disystem\/} (JTD) was introduced in such a way that
$\mathrm{JTD} = \mathrm{JTS}^{(2)}$, in our notations.
Theorem \ref{thm:EquivDialgDefn} immediately implies $\mathrm{JTD} =\mathrm{diJTS}=\mathrm{JTS}\otimes \Perm$.
Hence, $\bar \omega \otimes \mathrm{id}: \mathrm{JTD} \to \mathrm{diAs}$
defines a structure of a Jordan triple disystem on an associative dialgebra (c.f. \cite[Theorem 5.10]{BSO_Triple}).
\end{example}

\begin{example}
 Let $\mathrm{Jord}$ stand for the operad governing the variety of Jordan algebras,
$\mu \in \mathrm{Jord}(2)$ is the commutative operation.
Then there exists a morphism $\bar\omega : \mathrm{JTS}\to \mathrm{Jord}$ defined by
$\bar \omega (\tau ) = \gamma_{2,1}(\mu;\mu, \mathrm{id}) - \gamma_{2,1}(\mu;\mu, \mathrm{id})^{(23)}
 +\gamma_{1,2}(\mu;\mathrm{id}, \mu) $, i.e.,
$(a,b,c)=(ab)c - (ac)b + a(bc)$.

The notion of a Jordan dialgebra was studied in \cite{VF,Br09,Kol2008}, see also \cite{BPer}.
As in the previous example, Theorem \ref{thm:EquivDialgDefn} gives a new proof
of the relation between Jordan dialgebras and Jordan triple
disystems \cite[Theorem 7.3]{BSO_Triple}.
\end{example}

Let us fix two quotient morphisms
$\pi: \mathcal F_\Omega \to \mathcal P$,
$\rho: \mathcal F_\Xi \to \mathcal R$,
and a morphism
$\omega : \mathcal F_\Omega \to \mathcal F_\Xi$ inducing a
morphism from $\mathcal P$ to $\mathcal R$ which is also denoted by
$\omega$ for simplicity.

\begin{definition}
A $\mathcal P$-algebra $(A,\alpha )$ is called
{\em $\omega $-special\/} if there exists an $\mathcal R$-algebra
$(A,\beta )$ such that $\alpha = \omega \circ \beta $.
The same notion  makes sense for conformal
algebras over an arbitrary cocommutative bialgebra $H$.
\end{definition}

\begin{lemma}\label{lem:CurSpecial}
 If $(A,\alpha )$ is an $\omega $-special $\mathcal P$-algebra
then $(\Cur\, A, \Cur\,\alpha )$
is an $\omega$-special $\mathcal P$-conformal algebra.
\end{lemma}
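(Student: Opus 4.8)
The plan is to currify the witnessing classical structure. By definition of $\omega$-speciality there is an $\mathcal{R}$-algebra $(A,\beta)$ with $\beta\colon\mathcal{R}\to\mathcal{E}(A)$ and $\alpha=\omega\circ\beta$. Applying the current construction to the operad $\mathcal{R}$ and the $\mathcal{R}$-algebra $(A,\beta)$ produces an $\mathcal{R}$-conformal algebra $(\Cur\,A,\Cur\,\beta)$, where $\Cur\,\beta\colon\mathcal{R}\to\mathcal{E}^*(\Cur\,A)$ is a morphism of operads. This $(\Cur\,A,\Cur\,\beta)$ is the natural candidate for the $\mathcal{R}$-conformal structure witnessing $\omega$-speciality of $(\Cur\,A,\Cur\,\alpha)$, so the whole lemma reduces to the single identity $\Cur\,\alpha=\omega\circ\Cur\,\beta$.

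The key point is that the current functor commutes with precomposition by a morphism of operads, and this is a formal consequence of its definition. First I would record that $\Cur$ factors through a fixed morphism of operads $\Cur_A\colon\mathcal{E}(A)\to\mathcal{E}^*(\Cur\,A)$, defined by currying an operator $\varphi\in\mathcal{E}(A)(n)$ to the map sending $(h_1\otimes a_1)\otimes\dots\otimes(h_n\otimes a_n)$ to $(h_1\otimes\dots\otimes h_n)\otimes_H\varphi(a_1\otimes\dots\otimes a_n)$; that $\Cur_A$ is a morphism of operads is exactly the computation behind the current construction, carried out with $\mathcal{P}=\mathcal{E}(A)$ and the identity morphism. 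With this in hand one has $\Cur\,\gamma=\gamma\circ\Cur_A$ for every operad $\mathcal{Q}$ and every morphism $\gamma\colon\mathcal{Q}\to\mathcal{E}(A)$, since both sides agree on each element $f\in\mathcal{Q}(n)$ by the defining formula for $\Cur\,\gamma$.

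Carrying out the computation is then immediate: using $\alpha=\omega\circ\beta$ and associativity of composition of operad morphisms,
\[
\Cur\,\alpha=\alpha\circ\Cur_A=(\omega\circ\beta)\circ\Cur_A=\omega\circ(\beta\circ\Cur_A)=\omega\circ\Cur\,\beta .
\]
Equivalently, without naming $\Cur_A$, one checks on each $f\in\mathcal{P}(n)$ that $(\omega\circ\Cur\,\beta)(f)=(\Cur\,\beta)(\omega(f))$ sends $(h_1\otimes a_1)\otimes\dots\otimes(h_n\otimes a_n)$ to $(h_1\otimes\dots\otimes h_n)\otimes_H\beta(\omega(f))(a_1\otimes\dots\otimes a_n)$, which coincides with the image of the same argument under $(\Cur\,\alpha)(f)$ because $\alpha(f)=\beta(\omega(f))$. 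This exhibits $(\Cur\,A,\Cur\,\alpha)$ as an $\omega$-special $\mathcal{P}$-conformal algebra with witness $(\Cur\,A,\Cur\,\beta)$.

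I do not expect a genuine obstacle here: the statement is a naturality property of the current functor with respect to change of operad, and the only things demanding care are bookkeeping, namely respecting the paper's left-to-right composition convention and confirming that $\Cur\,\beta$ really is an $\mathcal{R}$-conformal algebra (the current construction with $\mathcal{P}$ replaced by $\mathcal{R}$). The mildly delicate step, if any, is verifying that $\Cur_A$ is a morphism of operads, that is, that currying is compatible with operadic composition and the $S_n$-action; but this is already contained in the proof of the current construction and need only be specialized.
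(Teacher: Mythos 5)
Your proposal is correct and follows essentially the same route as the paper: both reduce the lemma to the identity $\Cur\,(\omega\circ\beta)=\omega\circ\Cur\,\beta$ and verify it on each $f\in\mathcal P(n)$ using the defining formula for the current construction. Your packaging of this via a morphism $\Cur_A:\mathcal E(A)\to\mathcal E^*(H\otimes A)$ with $\Cur\,\gamma=\gamma\circ\Cur_A$ is a clean (and valid) reformulation of the same naturality fact, not a genuinely different argument.
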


\begin{proof}
Suppose there exists an $\mathcal R$-algebra $(A,\beta )$,
$\beta : \mathcal R\to \mathcal E(A)$, such that
$\alpha = \omega \circ \beta $.
Then the claim follows from the observation
\begin{equation}\label{eq:OmegaCur}
  \omega \circ \Cur\,\beta = \Cur\,(\omega \circ \beta).
\end{equation}
Indeed, for every $f\in \mathcal P(n)$
the pseudo-linear maps
$(\omega \circ \Cur\,\beta )(n)(f) \in \mathcal E^*(H\otimes A)$
are completely defined by their values at
$(1\otimes a_1,\dots , 1\otimes a_n)$, $a_i\in A$.
By the definition of $\Cur$, we have
$(\Cur\,\beta )(n)(g)(1\otimes a_1,\dots , 1\otimes a_n)
=(1\otimes \dots \otimes 1)\otimes _H (\beta(n)(g))(a_1,\dots , a_n)$
for every $g\in \mathcal R(n)$, in particular, for $g=\omega (n)(f)$.
This is now easy to see that left- and right-hang sides
of \eqref{eq:OmegaCur} coincide at every $f\in \mathcal P(n)$.
\end{proof}

\begin{lemma}\label{lem:ZeroSpecial}
If $(C, \alpha)$ is an $\omega $-special $\mathcal P$-conformal algebra
then $(C, \alpha^{(0)})$ is an $(\omega\otimes \mathrm{id})$-special
$\mathrm{di}\mathcal P$-algebra.
\end{lemma}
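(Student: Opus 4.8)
The plan is to produce an explicit $(\omega\otimes\mathrm{id})$-special structure on $C$ by transporting the witness supplied by $\omega$-speciality through the $(0)$-construction. By hypothesis there is an $\mathcal R$-conformal algebra $(C,\beta)$, $\beta:\mathcal R\to\mathcal E^*(C)$, with $\alpha=\omega\circ\beta$. Applying Lemma~\ref{lem:ZeroMorphism} to $(C,\beta)$ yields a morphism $\beta^{(0)}:\mathrm{di}\mathcal R\to\mathcal E(C)$, determined by $\beta^{(0)}(n)(g\otimes e_k^{(n)})=\beta(n)(g)\circ\mu_n^k$, so that $(C,\beta^{(0)})$ is a $\mathrm{di}\mathcal R$-algebra. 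I claim that $\beta^{(0)}$ is exactly the required witness, i.e.\ that $\alpha^{(0)}=(\omega\otimes\mathrm{id})\circ\beta^{(0)}$; once this is checked, the definition of speciality identifies $(C,\alpha^{(0)})$ as an $(\omega\otimes\mathrm{id})$-special $\mathrm{di}\mathcal P$-algebra.

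To verify the claimed equality I would check it on generators. Since $\mathrm{di}\mathcal P(n)=\mathcal P(n)\otimes\Perm(n)$ and $\Perm(n)$ has basis $e_1^{(n)},\dots,e_n^{(n)}$, it suffices to evaluate both sides at $f\otimes e_k^{(n)}$ for $f\in\mathcal P(n)$. On the $\mathrm{di}\mathcal R$ side, the Hadamard morphism acts by $(\omega\otimes\mathrm{id})(n)(f\otimes e_k^{(n)})=\omega(n)(f)\otimes e_k^{(n)}$, whence $\bigl((\omega\otimes\mathrm{id})\circ\beta^{(0)}\bigr)(n)(f\otimes e_k^{(n)})=\beta(n)(\omega(n)(f))\circ\mu_n^k$. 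On the other side, the relation $\alpha=\omega\circ\beta$ gives $\alpha(n)(f)=\beta(n)(\omega(n)(f))$, so $\alpha^{(0)}(n)(f\otimes e_k^{(n)})=\alpha(n)(f)\circ\mu_n^k=\beta(n)(\omega(n)(f))\circ\mu_n^k$. The two expressions coincide.

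Both $\alpha^{(0)}$ and $(\omega\otimes\mathrm{id})\circ\beta^{(0)}$ are morphisms of operads, hence $\Bbbk$-linear in each arity, so agreement on the spanning family $\{f\otimes e_k^{(n)}\}$ forces them to agree as morphisms $\mathrm{di}\mathcal P\to\mathcal E(C)$, completing the argument. I do not expect a genuine obstacle here: the whole content is a generator-level computation. The only points to keep straight are the diagrammatic composition convention (so that $\alpha=\omega\circ\beta$ means $\alpha(n)(f)=\beta(n)(\omega(n)(f))$) and the fact that the operator $\mu_n^k$ depends only on the $\Perm$-index $k$ and is therefore left untouched by $\omega$; this is precisely why tensoring with $\mathrm{id}$ on the $\Perm$ factor is compatible with the $(0)$-construction.
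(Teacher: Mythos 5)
Your proposal is correct and follows essentially the same route as the paper's own proof: both reduce the statement to the identity $(\omega\circ\beta)^{(0)}=(\omega\otimes\mathrm{id})\circ\beta^{(0)}$ and verify it by evaluating both sides on the elements $f\otimes e_k^{(n)}$ via the defining relation \eqref{eq:(0)functor}, obtaining $\beta(n)(\omega(n)(f))\circ\mu_n^k$ in each case. No gaps.
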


\begin{proof}
It is enough to  show
$(\omega\circ\beta)^{(0)} =
(\omega\otimes\mathrm{id})\circ \beta^{(0)}$
for every $\beta: \mathcal R\to \mathcal E^*(C)$.
Relation \eqref{eq:(0)functor} implies
\[
(\omega \circ \beta)^{(0)}(n):
f\otimes e^{(n)}_k \mapsto
(\omega \circ \beta)(n)(f)\circ \mu_n^k
=\beta(n)(\omega (n)(f))\circ \mu_n^k
\]
for every $f\in \mathcal P(n)$, $k=1,\dots, n$.
On the other hand,
\[
((\omega\otimes\mathrm{id})\circ \beta^{(0)})(n):
f\otimes e_k^{(n)} \mapsto
\beta^{(0)}(n) (\omega(n)(f)\otimes e_k^{(n)})
= \beta(n)(\omega (n)(f))\circ \mu_n^k.
\]
\end{proof}

The class of all $\omega $-special $\mathcal P$-algebras is closed
under Cartesian products. Therefore, the class of all homomorphic
images  of all subalgebras of $\omega $-special $\mathcal
P$-algebras  is a variety $\mathfrak S$.
Consider the set of polylinear
identities that hold on this variety and define the corresponding
operad $S^\omega \mathcal P$. This is a quotient operad of
$\mathcal P$, and there exists a morphism $S^\omega : \mathcal P
\to S^\omega \mathcal P$. Every $\mathcal P$-algebra
from $\mathfrak S$ is an $S^\omega \mathcal P$-algebra,
but the converse may not be true if $\mathrm{char}\,\Bbbk>0$.

\begin{lemma}
 Consider the class of all
quotient operads $\mathcal P'$  of $\mathcal P$
satisfying the following property:
For every morphism $\alpha:\mathcal R\to \mathcal E(A)$
there exists a morphism $\alpha': \mathcal P'\to \mathcal E(A)$
such that the diagram
\[
\begin{CD}
\mathcal F_\Omega @>\pi>> \mathcal P @>\pi' >> \mathcal P' \\
@VVV @VV\omega V @VV\alpha' V \\
\mathcal F_\Xi @>\rho >> \mathcal R @>\alpha >> \mathcal E(A)
\end{CD}
\]
 is commutative.
Then $S^\omega \mathcal P$ is a quotient of all such $\mathcal P'$.
\end{lemma}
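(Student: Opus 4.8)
The plan is to reduce the assertion to a single inclusion of operad ideals. Both $S^\omega\mathcal P$ and any admissible quotient $\mathcal P'$ arise as quotients of $\mathcal P$, through the surjective morphisms $S^\omega\colon\mathcal P\to S^\omega\mathcal P$ and $\pi'\colon\mathcal P\to\mathcal P'$. By the universal property of quotient operads, $S^\omega$ factors as $S^\omega=\psi\circ\pi'$ for a necessarily surjective morphism $\psi\colon\mathcal P'\to S^\omega\mathcal P$ exactly when $\mathrm{Ker}\,\pi'(n)\subseteq\mathrm{Ker}\,S^\omega(n)$ for all $n\in\mathbb N$. Such a $\psi$ is precisely what exhibits $S^\omega\mathcal P$ as a quotient of $\mathcal P'$, so the entire proof comes down to establishing this kernel inclusion.

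First I would identify $\mathrm{Ker}\,S^\omega$ explicitly. By construction $S^\omega\mathcal P$ governs the variety $\mathfrak S$ generated by all $\omega$-special $\mathcal P$-algebras, so $\mathrm{Ker}\,S^\omega(n)$ consists of those $t\in\mathcal P(n)$ whose image is a polylinear identity of $\mathfrak S$. Since polylinear identities are inherited by subalgebras, homomorphic images and Cartesian products, an identity holds on $\mathfrak S$ if and only if it holds on the generating class. Unwinding the definition of $\omega$-speciality, this says that $t\in\mathrm{Ker}\,S^\omega(n)$ if and only if $(\alpha\circ\omega)(n)(t)=0$ for every linear space $A$ and every morphism $\alpha\colon\mathcal R\to\mathcal E(A)$.

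With this description in hand, the inclusion is a direct diagram chase. Take $t\in\mathrm{Ker}\,\pi'(n)$, so that $\pi'(n)(t)=0$ in $\mathcal P'(n)$, and fix an arbitrary $\mathcal R$-algebra structure $\alpha\colon\mathcal R\to\mathcal E(A)$. The defining property of $\mathcal P'$ furnishes a morphism $\alpha'\colon\mathcal P'\to\mathcal E(A)$ making the right-hand square of the displayed diagram commute, that is, $\alpha'\circ\pi'=\alpha\circ\omega$ as morphisms $\mathcal P\to\mathcal E(A)$. Evaluating at $t$ yields $(\alpha\circ\omega)(n)(t)=\alpha'(n)\bigl(\pi'(n)(t)\bigr)=\alpha'(n)(0)=0$. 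As $\alpha$ was arbitrary, $t$ vanishes on every $\omega$-special $\mathcal P$-algebra, whence $t\in\mathrm{Ker}\,S^\omega(n)$ by the previous paragraph. This proves $\mathrm{Ker}\,\pi'(n)\subseteq\mathrm{Ker}\,S^\omega(n)$ for all $n$, and the desired factorization follows.

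I do not expect a genuine obstacle here: once the kernel of $S^\omega$ is correctly matched with vanishing on $\omega$-special algebras, the argument is purely formal. The one place requiring care is that identification, which relies on the variety--operad dictionary together with the fact that $\mathfrak S$ is generated, under the usual closure operations, by the $\omega$-special algebras alone; given this, testing a candidate identity on the generators suffices and the diagram chase closes immediately.
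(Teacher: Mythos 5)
Your proof is correct and takes essentially the same route as the paper: reduce the claim to the kernel inclusion $\mathrm{Ker}\,\pi'(n)\subseteq\mathrm{Ker}\,S^\omega(n)$ and obtain it by chasing an element of $\mathrm{Ker}\,\pi'(n)$ around the commutative square for an arbitrary $\alpha$. The only difference is that the paper inserts the intermediate step $\mathrm{Ker}\,\pi'(n)\subseteq\mathrm{Ker}\,\omega(n)\subseteq\mathrm{Ker}\,S^\omega(n)$, specializing $\alpha$ to the free countably generated $\mathcal R$-algebra (where $\alpha(n)$ is injective); this detour is not needed for the lemma itself but is what yields Corollary~\ref{cor:SpecialKernel}, so your direct argument is equally valid here.
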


\begin{proof}
Given $f\in \mathcal P(n)$,
if $\pi'(n)(f)=0$ then $(\omega\circ\alpha)(n)(f)=0$ for every $\alpha $.
For the free countably generated $\mathcal R$-algebra $(A,\alpha)$
each $\alpha(n)$ is injective, so
$\mathrm{Ker}\,\pi'(n) \subseteq \mathrm{Ker}\, \omega(n)$.

By the definition, $S^\omega \mathcal P$ satisfies the condition
on $\mathcal P'$ described above.
Hence,  the kernel of the quotient morphism
$S^\omega : \mathcal P\to S^\omega \mathcal P$
contains all $f\in \mathcal P(n)$ such
that $\omega(n) (f)=0$ in $\mathcal R(n)$.
Therefore,
$\mathrm{Ker}\,\pi'(n) \subseteq \mathrm{Ker}\, \omega(n)\subseteq
\mathrm{Ker}\,S^\omega(n)$.
\end{proof}

\begin{corollary}\label{cor:SpecialKernel}
 The kernel of $S^\omega : \mathcal P\to S^\omega \mathcal P$
coincides with the kernel of $\omega:\mathcal P\to \mathcal R$.
\end{corollary}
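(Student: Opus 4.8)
The plan is to read off the Corollary directly from the preceding Lemma by specializing $\mathcal P' = S^\omega \mathcal P$ and $\pi' = S^\omega$. The key observation is that the chain of inclusions $\mathrm{Ker}\,\pi'(n)\subseteq \mathrm{Ker}\,\omega(n)\subseteq \mathrm{Ker}\,S^\omega(n)$ recorded in the proof of the Lemma collapses to a pair of mutual inclusions, hence to an equality, the moment $\pi'$ is taken to be $S^\omega$ itself.

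First I would verify that $S^\omega \mathcal P$ does belong to the class of quotient operads to which the Lemma applies. Given any $\mathcal R$-algebra $(A,\alpha)$, the $\mathcal P$-algebra $(A,\omega\circ\alpha)$ is $\omega$-special by definition, hence lies in the variety $\mathfrak S$ and is therefore an $S^\omega \mathcal P$-algebra; this supplies a morphism $\alpha':S^\omega \mathcal P\to \mathcal E(A)$ making the relevant square commute. Thus $S^\omega \mathcal P$ satisfies the hypothesis of the Lemma, so the chain above is available with $\pi'=S^\omega$.

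Reading the chain in this case gives $\mathrm{Ker}\,S^\omega(n)\subseteq \mathrm{Ker}\,\omega(n)$ from its left inclusion and $\mathrm{Ker}\,\omega(n)\subseteq \mathrm{Ker}\,S^\omega(n)$ from its right inclusion, for every $n\in\mathbb N$. Combining the two yields $\mathrm{Ker}\,S^\omega(n)=\mathrm{Ker}\,\omega(n)$ for all $n$, which is precisely the assertion.

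Since all the substantive content---the use of the faithfulness of the free countably generated $\mathcal R$-algebra for the left inclusion, and the membership of $S^\omega \mathcal P$ in the relevant class for the right---has already been discharged in the Lemma, I do not expect any genuine obstacle here. The only step requiring attention is the verification that $S^\omega \mathcal P$ lies in the class, which is immediate from the definition of $\mathfrak S$ as the variety generated by the $\omega$-special $\mathcal P$-algebras.
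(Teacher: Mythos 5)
Your proposal is correct and is essentially the paper's own (implicit) argument: the corollary is read off from the preceding lemma by taking $\mathcal P'=S^\omega\mathcal P$, so that the chain $\mathrm{Ker}\,\pi'(n)\subseteq\mathrm{Ker}\,\omega(n)\subseteq\mathrm{Ker}\,S^\omega(n)$ collapses to an equality. Your verification that $S^\omega\mathcal P$ satisfies the hypothesis of the lemma (every $\mathcal R$-algebra $(A,\alpha)$ yields the $\omega$-special, hence $S^\omega\mathcal P$-, algebra $(A,\omega\circ\alpha)$) is exactly the point the paper dispatches with ``by the definition, $S^\omega\mathcal P$ satisfies the condition on $\mathcal P'$.''
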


\section{Speciality of algebras}
In this section, we state the solution of Problems \ref{probBSO}
and   \ref{probBSOp}.
First, let us reformulate the Problem \ref{probBSO}
in a more general framework.

On the one hand,
$\omega : \mathcal F_\Omega \to \mathcal F_\Xi$
gives rise  to
\[
 \omega \otimes \mathrm{id}: \mathrm{di}\mathcal F_\Omega =\mathcal F_\Omega \otimes \mathrm{Perm}
\to \mathcal F_\Xi \otimes \mathrm{Perm }=\mathrm{di}\mathcal F_\Xi.
\]
Thus, we may define $(\omega\otimes \mathrm{id})$-special
$\mathrm{di}\mathcal P$-algebras
and a variety $\mathfrak S^{(2)}$
generated by them.
Consider the operad $S^{\omega\otimes\mathrm{id}}\mathrm{di}\mathcal P$
 defined by all polylinear identities that hold on
$\mathfrak S^{(2)}$.
This is a generalization of the BSO procedure from \cite{BSO_Triple}.
In the case of zero characteristic, we can conclude that
every $S^{\omega\otimes\mathrm{id}}\mathrm{di}\mathcal P$-algebra
is a homomorphic image of a subalgebra of an
$(\omega \otimes \mathrm{id})$-special $\mathrm{di}\mathcal P$-algebra.

On the other hand, the variety of $S^\omega \mathcal P$-algebras
(defined by polylinear identities)
gives rise to the corresponding variety of
$\mathrm{di}S^\omega\mathcal P$-algebras, where
$\mathrm{di}S^\omega\mathcal P =
S^\omega\mathcal P\otimes \mathrm{Perm}$,
as above.

Both operads
$\mathrm{di}S^\omega\mathcal P$
and
$S^{\omega\otimes\mathrm{id}}\mathrm{di}\mathcal P$
are quotients of
$\mathcal P\otimes \Perm $ and, therefore,
of $\mathcal F_\Omega \otimes \Perm$.

In these terms, Problems \ref{probBSO} and
\ref{probBSOp} are particular cases of the following

\begin{theorem}\label{thm:BSO_general}
(1) If $\mathrm{char}\,\Bbbk =0$ then
$\mathrm{di}S^\omega\mathcal P =
S^{\omega\otimes\mathrm{id}}\mathrm{di}\mathcal P$.

(2) If $\mathrm{char}\,\Bbbk =p>0$ then
the identities of degree $d<p$ that hold on
$\mathrm{di}S^\omega\mathcal P$-algebras are the same
that hold on
$S^{\omega\otimes\mathrm{id}}\mathrm{di}\mathcal P$-algebras
\end{theorem}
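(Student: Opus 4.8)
The plan is to realize both operads as quotients of $\mathcal P\otimes\Perm$ and to prove that the kernels of the two quotient morphisms $\mathcal P\otimes\Perm\to\mathrm{di}S^\omega\mathcal P$ and $\mathcal P\otimes\Perm\to S^{\omega\otimes\mathrm{id}}\mathrm{di}\mathcal P$ coincide in every arity $n$. Since the family $e_1^{(n)},\dots,e_n^{(n)}$ is a basis of $\Perm(n)$, I write a generic element of $(\mathcal P\otimes\Perm)(n)$ as $t=\sum_{k=1}^n t_k\otimes e_k^{(n)}$ with $t_k\in\mathcal P(n)$, just as in Lemma \ref{lem:Cur2Alg-Central}, and translate every statement about $t$ into statements about its coordinates $t_k$.

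The first kernel is read off from the Hadamard structure. As $\mathrm{di}S^\omega\mathcal P=S^\omega\mathcal P\otimes\Perm$ and the quotient morphism is $S^\omega\otimes\mathrm{id}$, its kernel in arity $n$ is $\mathrm{Ker}\,S^\omega(n)\otimes\Perm(n)$, so $t$ belongs to it if and only if every $t_k\in\mathrm{Ker}\,S^\omega(n)$; by Corollary \ref{cor:SpecialKernel} this is the same as $t_k\in\mathrm{Ker}\,\omega(n)$ for all $k$.

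For the second kernel I would show that a polylinear $t$ vanishes on all $(\omega\otimes\mathrm{id})$-special $\mathrm{di}\mathcal P$-algebras---equivalently on the variety $\mathfrak S^{(2)}$, since polylinear identities are inherited by subalgebras, quotients and products---exactly when every $t_k\in\mathrm{Ker}\,\omega(n)$. The implication $(\Leftarrow)$ is direct: if each $t_k\in\mathrm{Ker}\,\omega(n)$ then $(\omega\otimes\mathrm{id})(t)=\sum_k\omega(n)(t_k)\otimes e_k^{(n)}=0$ in $\mathrm{di}\mathcal R$, so $t$ annihilates every special dialgebra, whose structure map factors through $\omega\otimes\mathrm{id}$. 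For $(\Rightarrow)$ I would test the hypothesis on a sufficiently large supply of special dialgebras, namely the algebras $(\Cur A)^{(0)}$ attached to $\omega$-special $\mathcal P$-algebras $A$: by Lemmas \ref{lem:CurSpecial} and \ref{lem:ZeroSpecial} each such dialgebra is $(\omega\otimes\mathrm{id})$-special, hence $t$ vanishes on it, and Lemma \ref{lem:Cur2Alg-Central} then forces every $t_k$ to vanish on every $\omega$-special $A$, i.e. $t_k\in\mathrm{Ker}\,S^\omega(n)=\mathrm{Ker}\,\omega(n)$. Both computations thus identify the two kernels with $\mathrm{Ker}\,\omega(n)\otimes\Perm(n)$, so the quotient operads coincide; this is exactly assertion (1).

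The same kernel computation, being insensitive to the characteristic, already shows that the two classes obey the same polylinear identities in every arity, which settles the polylinear part of (2). To reach arbitrary homogeneous identities of degree $d$ I would invoke the linearization--restitution dictionary: for $d<p$ every homogeneous identity of degree $d$ generates the same T-ideal as its complete linearization, because restituting the complete linearization of a multihomogeneous element of multidegree $(d_1,\dots,d_m)$ recovers the original up to the factor $\prod_i d_i!$, and each $d_i\le d<p$ makes this factor invertible modulo $p$. Hence for $d<p$ the degree-$d$ identities of each class are determined by its polylinear identities, which agree by the first part, and (2) follows. I expect the genuine difficulty to lie in the $(\Rightarrow)$ step of the kernel computation, that is, in manufacturing enough special dialgebras to detect each coordinate $t_k$ separately; this is precisely what the current-conformal construction, combined with a cocommutative bialgebra $H$ carrying a nonzero counit-annihilated element $T$ as in Lemma \ref{lem:Cur2Alg-Central}, is built to provide. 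The linearization step in (2) is then routine under the hypothesis $d<p$ and is exactly the point that breaks down for $d\ge p$.
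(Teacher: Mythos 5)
Your argument is correct and uses the same machinery as the paper (Corollary~\ref{cor:SpecialKernel}, the current construction, Lemmas~\ref{lem:CurSpecial}, \ref{lem:ZeroSpecial}, \ref{lem:Cur2Alg-Central}), but it is organized differently and the harder direction is genuinely not the paper's argument. The paper proves part~(1) at the level of classes of algebras: the inclusion ``every $\mathrm{di}S^\omega\mathcal P$-algebra is an $S^{\omega\otimes\mathrm{id}}\mathrm{di}\mathcal P$-algebra'' is obtained by embedding an arbitrary $\mathrm{di}S^\omega\mathcal P$-algebra $A$ into $(\Cur\,\hat A)^{(0)}$ via Corollary~\ref{cor:EmbConformal} and then using the characteristic-zero fact that the $S^\omega\mathcal P$-algebra $\hat A$ lies in $\mathfrak S$, i.e.\ is a homomorphic image of a subalgebra of an $\omega$-special algebra. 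You instead work entirely with kernels: for the corresponding inclusion of identity sets you test a polylinear $t$ on the dialgebras $(\Cur\,B)^{(0)}$ for $\omega$-special $B$ and extract the coordinates $t_k$ with Lemma~\ref{lem:Cur2Alg-Central}. This buys two things: you never need Corollary~\ref{cor:EmbConformal} or the identification of $S^\omega\mathcal P$-algebras with $\mathfrak S$, and your treatment of part~(2) dispenses with the paper's construction of the truncated relatively free algebra $F_1(X)=S^\omega\mathcal P\langle X\cup\overline X\rangle/(J+\Sigma)$ and the accompanying degree estimate on $\Sigma$. Two caveats you should address explicitly. First, your kernel computation is, as you say, insensitive to the characteristic, so at the polylinear level it yields the conclusion of part~(2) with no restriction on $d$; since the paper asserts (before Problem~\ref{probBSOp}) that the unrestricted statement fails in positive characteristic, and since the paper's own proof of~(2) explicitly compares \emph{polylinear} identities of degree $d<p$, you should either locate where your argument uses $d<p$ or make clear that in your reading the hypothesis $d<p$ is consumed only by the linearization--restitution step for non-polylinear identities (which also tacitly requires $\Bbbk$ infinite to reduce to multihomogeneous components). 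Second, your identification of ``polylinear identities of the class of $\mathcal Q$-algebras'' with $\mathrm{Ker}(\mathcal P\otimes\Perm\to\mathcal Q)$ silently uses faithfulness of the free $\mathcal Q$-algebra on its multilinear part; this is true but deserves a sentence.
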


\begin{proof}
(1)
We will show that the class of
$\mathrm{di}S^\omega\mathcal P$-algebras coincides with
the class of $S^{\omega\otimes\mathrm{id}}\mathrm{di}\mathcal P$-algebras.

''$\supseteq $'': To compare varieties, we may compare the sets of
defining identities. Suppose
$f=f(x_1,\dots, x_n)\in \mathcal F_{\Omega^{(2)}} (n)$
is a polylinear
identity that holds on the variety
of all $\mathrm{di}S^\omega \mathcal P$-algebras.
Hence,
$\zeta_\Omega (n)(f) = \sum\limits_{k=1}^n f_k\otimes e_k^{(n)}
\in \mathrm{Ker}\,(S^\omega\otimes \mathrm{id})(n)$, so
 $f_k \in \mathrm{Ker}\,S^\omega(n) = \mathrm{Ker}\,\omega(n)$
for all $k=1,\dots, n$, see Corollary \ref{cor:SpecialKernel}.
Therefore, $(\omega \otimes \mathrm{id})(n)(\zeta_\Omega(n)(f)) = 0$,
i.e.,
$\zeta_\Omega(n)(f)$ is an identity of the variety of all
$S^{\omega\otimes\mathrm{id}}\mathrm{di}\mathcal P$-algebras.
Since the kernel of $\zeta_\Omega(n)$ annihilates in $\mathrm{di}\mathcal P$,
we obtain that $f$ is an identity of the variety of all $S^{\omega\otimes\mathrm{id}}\mathrm{di}\mathcal P$-algebras.
Such a relation between identities implies the claim.

Note that this part of the proof does not depend on the characteristic
of the base field.

''$\subseteq $'':
Assume $A$ is a $\mathrm{di}S^\omega\mathcal P $-algebra.
Then by Corollary~\ref{cor:EmbConformal}
$A\subseteq (\Cur\,\hat A)^{(0)}$,
where $\hat A$ is a
$S^\omega\mathcal P $-algebra. Hence,
$\hat A = \varphi(B_1)$, where $\varphi$ is a homomorphism
of $\mathcal P$-algebras, $B_1\subseteq B$, and
$B$ is an $\omega$-special $\mathcal P$-algebra.
Then $\Cur\,B$ is an $\omega$-special conformal $\mathcal P$-algebra
by Lemma~\ref{lem:CurSpecial},
therefore, $(\Cur\,B)^{(0)}$
is an $(\omega\otimes \mathrm{id})$-special
 $\mathrm{di}\mathcal P$-algebra by Lemma~\ref{lem:ZeroSpecial}.
Since $(\Cur\, B_1)^{(0)} \subseteq (\Cur\, B)^{(0)}$
and
$\Cur\,\hat A = \Cur\,\varphi (\Cur\, B_1)$,
we have
$(\Cur\,\hat A)^{(0)} = (\Cur\,\varphi)^{(0)} ((\Cur\,  B_1)^{(0)})$,
i.e., $(\Cur\,\hat A)^{(0)}$ is a homomorphic image of a subalgebra in
an $(\omega\otimes \mathrm{id})$-special
 $\mathrm{di}\mathcal P$-algebra.
Hence, $A$ belongs to the variety of
$S^{\omega\otimes\mathrm{id}}\mathrm{di}\mathcal P$-algebras.

(2)
We have to compare the sets of polylinear identities of degree $d<p$
that hold on all algebras from $\mathfrak S^{(2)}$
and on all $\mathrm{di}S^\omega \mathcal P$-algebras.
Denote the first set by $\mathrm{Id}_d(\mathfrak S^{(2)})$
and the latter one by
$\mathrm{Id}_d(\mathrm{di} S^\omega\mathcal P)$.

The embedding
$\mathrm{Id}_d(\mathrm{di} S^\omega\mathcal P)
\subseteq \mathrm{Id}_d(\mathfrak S^{(2)})$ has already been proved
in part~(1). It remains to prove the converse.

Let $X=\{x_1,x_2, \dots \}$ be a countable set of variables
and $\overline{X}=\{\bar x_1,\bar x_2, \dots \}$ be a copy of $X$.
Consider the free $S^\omega \mathcal P$-algebra
$S^\omega \mathcal P \langle X\cup\overline X\rangle$
generated by $X$ and $\overline{X}$. Since
the class of $ S^\omega\mathcal P$-algebras is a variety
defined by homogeneous identities,
 the notion of degree is well defined for its elements.
Denote by $\deg_X f$ the degree of
$f\in  S^\omega\mathcal P\langle X\cup\overline X\rangle$
with respect to all elements from $X$.

Consider the $ S^\omega\mathcal P$-algebra
\[
F(X) = S^\omega \mathcal P \langle X\cup\overline X\rangle/ J ,
\]
where $J$ is the linear span of all homogeneous
$f\in  S^\omega\mathcal P\langle X\cup\overline X\rangle$
such that $\deg_X f\ge 2$.

Assume
 $t=t(x_1,\dots, x_d) \in \mathrm{Id}_d(\mathcal S^{(2)}) $.
As an element of $\mathrm{di}S^\omega \mathcal P (d)$,
$t$ can be identified with
$t_1\otimes e_1^{(d)}+\dots + t_d\otimes e_d^{(d)}$,
$t_k\in S^\omega \mathcal P(d)$.

There exists a T-ideal $\Sigma $ in
$S^\omega \mathcal P\langle X\cup \overline{X}\rangle $
such that
\[
S^\omega \mathcal P\langle X\cup \overline{X}\rangle /\Sigma \in \mathfrak S.
\]
For every nonzero homogeneous $f\in \Sigma $ we have $\deg f\ge p$
since all identities of lower degree follow from polylinear identities
that hold on $S^\omega \mathcal P$-algebras by definition.
Hence,
\[
F_1(X) = S^\omega \mathcal P \langle X\cup \overline{X}\rangle /(J+\Sigma)\simeq
 F(X)/ ((\Sigma + J)/J)\in \mathfrak S,
\]
so $(\Cur F_1(X))^{(0)}$ satisfies the identity $t=0$.
By Lemma \ref{lem:Cur2Alg-Central},
$F_1(X)$ satisfies all identities $t_k=0$, $k=1,\dots , d$.
But if
$t_k\ne 0$ in $S^\omega \mathcal P(d)$
then
$t_k(\bar x_1,\dots, \bar x_d)\notin \Sigma + J$ in
 $S^\omega \mathcal P\langle X\cup \overline X\rangle $ by the
degree-related reasoning.
Therefore, $t=0$ in $\mathrm{di}S^\omega \mathcal P(d)$.
\end{proof}

Given a triple $(\mathcal P, \mathcal R, \omega )$ as above,
a non-zero $f\in \mathcal P(n)$ is said to be a {\em special identity\/}
if $f\in \mathrm{Ker}\,\omega (n)$.
As a corollary of Theorem \ref{thm:BSO_general},
we may conclude that in all possible settings
$(\mathrm{di}\mathcal P, \mathrm{di}\mathcal R, \omega\otimes\mathrm{id} )$
we should not expect an existence of polylinear special identities different from
$f\otimes e_k^{(n)}$, where $f$ is a
special identity for $(\mathcal P, \mathcal R, \omega )$.
This explains, in particular, the results of \cite{BPSO}, \cite{BPer}, and
\cite{Vor11} concerning special identities
of Jordan and Mal'cev algebras.

To be more precise, state the following

\begin{corollary}
Let $\mathrm{char}\,\Bbbk =0$.
 Suppose $f\in \mathcal F_{\Omega^{(2)}}\langle X \rangle$,
 $X=\{x_1,x_2,\dots \}$, is a polynomial of type $\Omega^{(2)}$.
Denote by $L(f)$ the complete linearization of $f$,
 we may suppose $L(f)\in \mathcal F_{\Omega^{(2)}}(n)$.
Then the following conditions are equivalent.
\begin{itemize}
\item[(S1)]
 $f=0$ is an identity
 on all $(\omega \otimes \mathrm{id})$-special
$\mathrm{di}\mathcal P$-algebras, but
 not an identity on all $\mathrm{di}\mathcal P$-algebras;
\item[(S2)]
$ \zeta_\Omega(n)(L(f)) = \sum\limits_{k=1}^n f_k\otimes e^{(n)}_k$,
 $f_k\in \mathcal F_\Omega$,
where all $f_k$ are identities on the class of all $\omega$-special
$\mathcal P$-algebras and at least one of them is
not an identity on the class of all $\mathcal P$-algebras.
\end{itemize}
\end{corollary}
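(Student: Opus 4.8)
The plan is to reduce everything to the polylinear element $L(f)$ and then to read off both clauses of each condition coordinate-wise in the Hadamard product with $\Perm$. First I would use that, since $\mathrm{char}\,\Bbbk =0$ and the base field is infinite, an identity $f=0$ holds on a given class of $\Omega^{(2)}$-algebras if and only if its complete linearization $L(f)=0$ holds on the same class; this is the standard polarization argument, which recovers every multihomogeneous component of $f$ from $L(f)$ and, conversely, expresses $L(f)$ as a $\Bbbk$-combination of specializations of $f$. Applying this to the two classes appearing in (S1)---the $(\omega\otimes\mathrm{id})$-special $\mathrm{di}\mathcal{P}$-algebras and all $\mathrm{di}\mathcal{P}$-algebras---I may work throughout with $L(f)\in\mathcal{F}_{\Omega^{(2)}}(n)$. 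Thus (S1) becomes the conjunction of (a) $L(f)$ maps to $0$ in $S^{\omega\otimes\mathrm{id}}\mathrm{di}\mathcal{P}(n)$, and (b) $\pi^{(2)}(n)(L(f))\ne 0$ in $\mathrm{di}\mathcal{P}(n)$.

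Next I would pass through $\zeta_\Omega$, writing $\zeta_\Omega(n)(L(f))=\sum_{k=1}^n f_k\otimes e_k^{(n)}$ exactly as in the statement. The key bookkeeping device is that $\Perm(n)=\Bbbk^n$ has basis $\{e_k^{(n)}\}$ and every structure map on a Hadamard product $\mathcal{Q}\otimes\Perm$ is defined componentwise; hence an element $\sum_k g_k\otimes e_k^{(n)}$ of $\mathcal{Q}(n)\otimes\Perm(n)$ vanishes if and only if each $g_k$ vanishes in $\mathcal{Q}(n)$. This is what will split both (a) and (b) into $n$ separate statements about the $f_k$.

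For (a) I would invoke Theorem~\ref{thm:BSO_general}(1), which in zero characteristic identifies $S^{\omega\otimes\mathrm{id}}\mathrm{di}\mathcal{P}$ with $\mathrm{di}S^\omega\mathcal{P}=S^\omega\mathcal{P}\otimes\Perm$. Under this identification $L(f)$ maps to $\sum_k (S^\omega\circ\pi)(n)(f_k)\otimes e_k^{(n)}$, so (a) is equivalent to $(S^\omega\circ\pi)(n)(f_k)=0$ for every $k$; by the very definition of $S^\omega\mathcal{P}$ (governed by the polylinear identities of all $\omega$-special $\mathcal{P}$-algebras) this says precisely that each polylinear $f_k$ is an identity on the class of $\omega$-special $\mathcal{P}$-algebras, the first half of (S2). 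For (b), since $\pi^{(2)}=(\pi\otimes\mathrm{id})\circ\zeta_\Omega$, I compute $\pi^{(2)}(n)(L(f))=\sum_k \pi(n)(f_k)\otimes e_k^{(n)}$, which is nonzero if and only if some $\pi(n)(f_k)\ne 0$, i.e.\ some $f_k$ fails to be an identity on all $\mathcal{P}$-algebras, the second half of (S2). Combining the two equivalences gives the equivalence of (S1) and (S2).

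The coordinate-wise vanishing in $\mathcal{Q}\otimes\Perm$ and the tracking through $\zeta_\Omega$ and $\pi^{(2)}$ are routine. The step I expect to demand the most care is the passage from $f$ to $L(f)$: I must ensure that the \emph{non}-identity clause survives complete linearization, that is, that $f=0$ fails on some $\mathrm{di}\mathcal{P}$-algebra exactly when $L(f)=0$ does, and this is where zero characteristic is genuinely needed (the polarization and the inverse specialization both cost only nonzero scalars). The essential structural input---that speciality commutes with the $\mathrm{di}$-construction, so that $S^{\omega\otimes\mathrm{id}}\mathrm{di}\mathcal{P}=\mathrm{di}S^\omega\mathcal{P}$---is already furnished by Theorem~\ref{thm:BSO_general}(1), so no further obstruction arises there.
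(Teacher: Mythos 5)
Your proof is correct and follows essentially the same route as the paper's: linearize using $\mathrm{char}\,\Bbbk=0$, push $L(f)$ through $\zeta_\Omega$, and read off both clauses of (S1) componentwise in the Hadamard product with $\Perm$. The only real difference is one of bookkeeping: you justify the first clause by invoking Theorem~\ref{thm:BSO_general}(1) to replace $S^{\omega\otimes\mathrm{id}}\mathrm{di}\mathcal P$ by $\mathrm{di}S^{\omega}\mathcal P$, whereas the paper argues directly that this clause amounts to $(\omega(n)\otimes\mathrm{id})\zeta_\Omega(n)(L(f))=0$, i.e.\ $\omega(n)(f_k)=0$ for all $k$, and then uses Corollary~\ref{cor:SpecialKernel} --- a lighter tool that yields the same conclusion.
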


\begin{proof}
Over a field of characteristic zero, $f$ is a special identity if and only if
so is $L(f)$.
If $L(f)$ does not hold
on all $\mathrm{di}\mathcal P$-algebras
then $(\pi(n)\otimes \mathrm{id})\zeta_\Omega (n)(L(f))\ne 0$, i.e.,
there exists $k$ such that $\pi(n)(f_k)\ne 0$, so $f_k$
does not hold on all $\mathcal P$-algebras. On the other hand,
$(\omega(n)\otimes \mathrm{id})\zeta_\Omega (n) = 0$,
i.e., $\omega (n)(f_k)=0$ for all $k$, so all $f_k$ are identities on
the class of all $\omega$-special $\mathcal P$-algebras.
The proof of the converse statement is similar.
\end{proof}

\end{document}